\newtheorem{teo}{Theorem}
\newtheorem{lema}{Lemma}
\newtheorem{cor}{Corolary}
\newtheorem{prop}{Proposition}
\newtheorem{defi}{Definition}
\newtheorem{exem}{Example}
\newtheorem{obs}{Remark}
\begin{document}

\title{Dynamic Ordered Weighted Averaging Functions for Complete Lattices\footnote{Preprint submited to Soft Computing}}


\author{Antonio Diego S. Farias \and Regivan H. N. Santiago \and Benjam\'in Bedregal
}


%

\institute{Antonio Diego S. Farias \at
	Federal Rural University of Semi-Arid - UFERSA, Universitary Campus of Pau dos Ferros, BR 226, KM 405, S\~ao Geraldo, 59.900-000, Pau dos Ferros, RN, Brazil\\
	\email{antonio.diego@ufersa.edu.br}
	\and
	Regivan H. N. Santiago \and Benjam\'in Bedregal\at
	Department of Informatics and Applied Mathematics - DIMAp, Federal University of Rio Grande do Norte - UFRN, Avenue Senador Salgado Filho, 3000, Universitary Campus of Lagoa Nova, 59.078-970, Natal, RN, Brazil
}

\date{Received: date / Accepted: date}

\maketitle

\begin{abstract}
	In this paper we introduce a class of operators on complete lattices called \textbf{Dynamic Ordered Weighted Averaging (DYOWA) functions}. These functions provide a generalized form  of an important class of aggregation functions: The Ordered Weighted Averaging (OWA) functions, whose applications can be found in several areas like: Image Processing and Decision Making. The wide range of applications of OWAs motivated many researchers to study their variations. One of them was proposed by Lizassoaim and Moreno in 2013, which extends  those functions to complete lattices. Here, we propose a new generalization of OWAs that also generalizes the operators proposed by Lizassoaim and Moreno.
\keywords{Aggregations functions \and t-norms \and t-conorms \and OWA functions\and DYOWA functions \and Complete lattices.}
\end{abstract}

\section{Introduction}

After the contributions of \citep{ZADEH1965} in the field of Fuzzy Sets, many extensions of classical mathematical theories have been developed, with several possibilities of application. Applications in areas like: Image Processing and Decision Making require some special functions capable of encoding a set of multiple values in a single value; these functions are called: \textit{Aggregation functions} \citep{Gleb2016,Bustince2013,Chen2,DUBOIS2004,Liang2014,Paternain2015,Paternain2012,Zhou2008}.

Aggregation functions can be classified into four classes: Averaging, conjunctive, disjunctive and mixed. The disjunctive and conjunctive aggregations functions provide, respectively, models for disjunctions and conjunctions in Fuzzy Logic \citep{Gleb2016,overlap,Bustince2012,DIMURO2014,DUBOIS1985,Klement,FariasJIFS2016}. On the other hand, averaging aggregation functions can be applied, for example, in fields like image processing and decision making \citep{Paternain2015,Bustince,Yager1988,Zadrozny}.

A special type of averaging aggregation  is called: {\bf Ordered Weighted Averaging} function, or simply \textbf{OWA}, function. It was developed by Yager \citep{Yager1988} with the intention to study the problem of multiple decision making, however many other applications for such operators have arisen since then \citep{Paternain2015,Llamazares,Torra,Lin}.

Some variations of \textsf{OWA}s can be found in  literature; e.g. see \citep{Chen2,Cheng,Merigo1,Merigo2,Yager2006}. All of them are defined on the set $[0,1]$. In 2013, Lizasoain and Moreno \citep{Lizasoain} generalized those operators to any complete lattice $L$.

All of such different approaches of \textsf{OWA}s have an essential common factor: They use a \textit{fixed vector of weights} $(w_1,w_2,\cdots,w_n)$ for  the final calculation. In this paper, we propose a new way of generalization of \textsf{OWA}s on complete lattices; the vector of weights is determined from the input arguments providing a ``dynamic flavour''. More precisely, \textit{the weights are variables} defined from the input vector.

We start this paper by exposing some basic concepts such as: Aggregation functions, \textsf{OWA}s, T-norms and T-conorms.  Sections 3 and 4 provide the extension of some  concepts previously listed to complete lattices; they also expose the generalization proposed by Lizasoain and Moreno. In section 5, we introduce our proposal of generalizing \textsf{OWA} for complete lattices, we study some of its properties and present some examples in different environments. We will show in this part of the paper that the \textsf{OWA} functions proposed here, as well as those proposed by Yager, are averaging functions. We will also prove that the Yager operators can be obtained as a particular case of our \textsf{OWA} operators. To conclude, we bring the section of conclusions and future works.

\vspace{-0.5cm}
\section{Aggregation Functions}

The {\bf aggregation functions} are mathematical tools that allow you to perform grouping complex information into a more simple information. More precisely, these functions are rules that associate each $n$ -dimensional input to a unique value, the output. The formal definition is presented below:

\begin{defi}[{\bf Aggregation Function}]
	A  function $A:[0,1]^n$\linebreak$\rightarrow[0,1]$ which satisfies the following properties:
	\begin{enumerate}
		\item[{\bf (A1)}] $A(0,0,\cdots, 0)=0$ and $A(1,1,\cdots,1)=1$;
		\item[{\bf (A2)}] $A(x_1,x_2,\cdots,x_n)\le A(y_1,y_2,\cdots, y_n)$ whenever $x_i\le y_i$ for all $1\le i\le n$.
	\end{enumerate}
	is called of {\bf n-ary  aggregation function}.
\end{defi}

Applications of aggregation functions can be found, for example, in decision-making problems and in the formulation of some fuzzy logic connectives (see \citep{Gleb2016}). In the following, we introduce some notations that will be used in this paper.

\begin{obs}\hfill
	\begin{enumerate}
		\item We use $\overrightarrow{x}\in X^n$ to denote the $n$-dimentional vector $\overrightarrow{x}=(x_1,x_2,\cdots,x_n)$ whose coordinates $x_i$ belong to the set $X$.
		\item Functions as $\min(\overrightarrow{x})=\min\{x_1,x_2,\cdots,x_n\}$ and \linebreak $\max(\overrightarrow{x})=\max\{x_1,x_2,\cdots,x_n\}$ are classical example of n-ary aggregations function.
		\item In order to simplify the terminology in some points of the text we use the term aggregate function instead of n-any aggregation function.
	\end{enumerate}
\end{obs}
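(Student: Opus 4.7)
The plan is to verify directly that the functions $\min$ and $\max$ on $[0,1]^n$ satisfy the two defining axioms (A1) and (A2) of an $n$-ary aggregation function, since this is the only non-terminological claim made in the Remark.

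For (A1), the boundary conditions, I would simply evaluate at the two constant vectors: $\min(0,0,\ldots,0)=0$ and $\min(1,1,\ldots,1)=1$ because every coordinate of each input is identical, so the minimum (and likewise the maximum) coincides with that common value. This is a one-line check from the definition of the minimum and maximum of a finite set.

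For (A2), monotonicity, I would use the variational characterization of the infimum and supremum in the chain $([0,1],\le)$. Assume $x_i\le y_i$ for every $i\in\{1,\ldots,n\}$. Then for each $i$ we have $\min(\overrightarrow{x})\le x_i\le y_i$, so $\min(\overrightarrow{x})$ is a lower bound for $\{y_1,\ldots,y_n\}$; since $\min(\overrightarrow{y})$ is the greatest such lower bound, we obtain $\min(\overrightarrow{x})\le\min(\overrightarrow{y})$. The dual argument handles $\max$: each $x_i\le y_i\le\max(\overrightarrow{y})$, so $\max(\overrightarrow{y})$ is an upper bound for $\{x_1,\ldots,x_n\}$, whence $\max(\overrightarrow{x})\le\max(\overrightarrow{y})$.

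There is no real obstacle here; the verification is a direct appeal to the definitions. The only observation worth flagging is that the argument uses only the order-theoretic meaning of $\min$ and $\max$ as meet and join. Consequently, precisely the same reasoning will generalize verbatim to $\bigwedge$ and $\bigvee$ on an arbitrary complete lattice $L$, which is exactly the setting into which Sections 3 and 4 of the paper will move.
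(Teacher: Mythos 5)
Your verification is correct and is exactly the routine check the paper leaves implicit: the remark is stated without proof because the boundary conditions (A1) and monotonicity (A2) for $\min$ and $\max$ follow immediately from the definitions, precisely as you argue. Your closing observation that the argument uses only the meet/join structure and so transfers verbatim to an arbitrary complete lattice is accurate and anticipates the paper's later extension of aggregation functions to $L^n$.
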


Aggregation functions can be classified into four different types:

\begin{defi}
	Let $A:[0,1]^n\rightarrow [0,1]$ be an aggregation function. We say that $A$ is a:
	\begin{enumerate}
		\item[(i)] {\bf Averaging aggregation function} if $\min(\overrightarrow{x})\le A(\overrightarrow{x})\le \max(\overrightarrow{x})$ for any $\overrightarrow{x}\in[0,1]^n$;
		
		\item[(ii)] {\bf Conjuntive aggregation function} if $A(\overrightarrow{x})\le \min(\overrightarrow{x})$ for all $\overrightarrow{x}\in[0,1]^n$;
		
		\item[(iii)] {\bf Disjuntive aggregation function} if $A(\overrightarrow{x})\geq \max(\overrightarrow{x})$ for all $\overrightarrow{x}\in[0,1]^n$;
		
		\item[(iv)] {\bf Mixed aggregation function} if it does not belong to any of the previous classes.
	\end{enumerate}
\end{defi}

Table \ref{TableExAgg} presents some examples of  aggregation functions.

\begin{table*}[!htb]
	\centering
	\begin{tabular}{lllll}
		\hline\
		{\bf Function} & {\bf Averaging} & {\bf Conjunctive} & {\bf Disjunctive} & {\bf Mixed}\\
		\hline\
		$\!\!\min(\overrightarrow{x})$ & \ \ \ \ \ \ X & \ \ \ \ \ \ \ \ \ X & &\\
		$\max(\overrightarrow{x})$ & \ \ \ \ \ \ X & & \ \ \ \ \ \ \ \ \ X &\\
		$\displaystyle arith(\overrightarrow{x})=\frac{1}{n}\sum\limits_{i=1}^nx_i$ & \ \ \ \ \ \ X & & &\\
		$\displaystyle T_P(\overrightarrow{x})=\prod\limits_{i=1}^nx_i$ & & \ \ \ \ \ \ \ \ \ X & &\\
		$\displaystyle S_P(\overrightarrow{x})=1-\prod\limits_{i=1}^n(1-x_i)$ & & & \ \ \ \ \ \ \ \ \ X &\\
		$\displaystyle f(\overrightarrow{x})=\frac{\prod\limits_{i=1}^n x_i}{\prod\limits_{i=1}^n x_i+\prod\limits_{i=1}^n(1-x_i)}$ & & & & \ \ \ \ \ X \\
		\hline
	\end{tabular}	
	\caption{Examples of aggregation functions}\label{TableExAgg}
\end{table*}

\begin{defi}
	A function $f:[0,1]^n\rightarrow[0,1]$ satisfies the properties of:
	\begin{enumerate}
		
		\item[{\bf (IP)}] {\bf Idempotency} if $f(x,x,\cdots,x)=x$ for all $x\in[0,1]$;
		
		\item[{\bf (SP)}] {\bf Symmetry} if for any permutation $\sigma$ of the set $\{1,2,\cdots,n\}$ we have  
		$f(x_1,x_2,\cdots,x_n)= f(x_{\sigma(1)},x_{\sigma(2)},\cdots,x_{\sigma(n)})$;
		
		\item[{\bf (NP)}] {\bf Neutral element} if there is a element $e\in[0,1]$ such that for all $t\in[0,1]$ allocated in any coordinate $i$ we have to $f(e,\cdots,e,t,e\cdots, e)=t$;
		
		\item[{\bf (AP)}] {\bf Absorption} if $f$ has an absorption element $a\in[0,1]$, i.e., if for all $i\in\{1,2,\cdots,n\}$ we have to $f(x_1,\cdots,x_{i-1},$\linebreak $a,x_{i+1},\cdots, x_n)=a$;
		
		\item[{\bf (HP)}] {\bf Homogeneity} if for any $\lambda,x_1,\cdots,x_n\in[0,1]$ we have to $f(\lambda x_1,\cdots,\lambda x_n)=\lambda f(x_1,\cdots, x_n)$;
		
		\item[{\bf (ZD)}] {\bf Zero divizor} if there is  $\overrightarrow{x}=(x_1,\cdots,x_n)\in (0,1]^n$ such that $f(\overrightarrow{x})=0$;
		
		\item[{\bf (OD)}] {\bf One divizor} if there is  $\overrightarrow{x}=(x_1,\cdots,x_n)\in [0,1)^n$ such that $f(\overrightarrow{x})=1$;
		
		\item[{\bf (ASP)}] {\bf Associativity} if $n=2$ and $f(x,f(y,z))=f(f(x,y),z)$ for any $x,y,z\in[0,1]$.
	\end{enumerate}
\end{defi}

Table \ref{TablePropAgreg} presents some examples of aggregations functions which satisfy such properties.

\begin{table*}[!htb]
	\centering
	\begin{tabular}{ll}
		\hline\
		{\bf Aggregation function} & {\bf Properties} \\
		\hline\
		$\!\!\min(\overrightarrow{x})$ & {\bf (IP)}, {\bf (SP)}, {\bf (NP)}, {\bf (AP)}, {\bf (HP)} and {\bf (ASP)}\\
		$\max(\overrightarrow{x})$ & {\bf (IP)}, {\bf (SP)}, {\bf (NP)}, {\bf (AP)}, {\bf (HP)} and {\bf (ASP)}\\
		$\displaystyle arith(\overrightarrow{x})=\frac{1}{n}\sum\limits_{i=1}^nx_i$ & {\bf (IP)}, {\bf (SP)}, {\bf (HP)} and {\bf (ASP)}\\
		$\displaystyle T_P(\overrightarrow{x})=\prod\limits_{i=1}^nx_i$ & {\bf (SP)}, {\bf (NP)}, {\bf (AP)} and {\bf (ASP)}\\
		$\displaystyle S_P(\overrightarrow{x})=1-\prod\limits_{i=1}^n(1-x_i)$ & {\bf (SP)}, {\bf (NP)}, {\bf (AP)} and {\bf (ASP)}\\
		$\displaystyle f(\overrightarrow{x})=\frac{\prod\limits_{i=1}^n x_i}{\prod\limits_{i=1}^n x_i+\prod\limits_{i=1}^n(1-x_i)}$ & {\bf (SP)} and {\bf (AP)}\\
		\hline
	\end{tabular}	
	\caption{Properties of aggregation functions}\label{TablePropAgreg}
\end{table*}

\subsection{OWA Functions}

The Ordered Weighted Averaging -- OWA function, defined by Yager in \citep{Yager1988}, constitute an important family of averaging aggregation functions, 
which have been widely studied by many researchers around the world, motivated by its wide range of applications. Applications of OWA can be found, for 
example, in image processing \citep{Paternain2015,Bustince,Zadrozny}, in neural networks \citep{Amin1,Amin2,Emrouznejad} and in decision 
making \citep{Cheng,Ahn1,DeMiguel2016}. The definition of this important class of functions is presented below:

\begin{defi}[{\bf OWA Funtion}]
	Given a $n$-dimentional vector of weights \footnote{A $n$-dimentional vector of weights is $\overrightarrow{w}=(w_1,\cdots,w_n)\in [0,1]^n$ such that $\sum\limits_{i=1}^nw_i=1$.} $\overrightarrow{w}=(w_1,\cdots,w_n) \in [0,1]^n$, the function
	$$OWA_{\overrightarrow{w}}(\overrightarrow{x})=\sum\limits_{i=1}^nw_ix_{(i)},$$
	where $(x_{(1)},x_{(2)},\cdots,x_{(n)})=Sort(\overrightarrow{x})$ is the descending ordernation of the vector $(x_1,x_2,\cdots,x_n)$, is called of {\bf Ordered Weighted Averaging} function or simply {\bf OWA function}.
\end{defi} 

It is not difficult to show that for any vector of weights $\overrightarrow{w}=(w_1,\cdots,w_n)$, the function $OWA_{\overrightarrow{w}}$ is an averaging aggregation function. Furthermore, $OWA's$ are continuous functions which satisfy: {\bf (IP)}, {\bf (SP)} and {\bf (HP)}, but do not:  {\bf (ZD)} and {\bf (OD)}. They are parametric  functions; namely: Depending on the vector of weights it will  simulate an average aggregation function. Below, we present some examples:

\begin{exem}\hfill\
	\begin{enumerate}
		\item $\min(x_1,\cdots,x_n)$ is obteined by vector of weights $\overrightarrow{w}=(0,\cdots,0,1)$;
		
		\item $\max(x_1,\cdots,x_n)$ can be obteined by vector of weights $\overrightarrow{w}=(1,0\cdots,0)$;
		
		\item $arith(x_1,\cdots,x_n)$ is the OWA function with \linebreak$\overrightarrow{w}=\left(\frac{1}{n},\cdots,\frac{1}{n}\right)$;
		
		\item The median, $$med(x_1,\cdots,x_n)=\left\{\begin{array}{ll} \frac{x_{(k)}+x_{(k+1)}}{2}, & \mbox{ if } n=2k\\ x_{\left(\frac{k+1}{2}\right)}, & \mbox{ otherwise} \end{array}\right.$$ can be found from:
		\begin{itemize}
			\item If $n$ is odd, then $w_i=0$ for all $i\ne\lceil\frac{n}{2}\rceil$ and $w_{\lceil n/2\rceil}=1$.
			\item If $n$ is even, then $w_i=0$ for all $i\ne \lfloor\frac{n+1}{2}\rfloor$ and $i\ne \lceil\frac{n+1}{2}\rceil$, and $w_{\lceil (n+1)/2\rceil}=w_{\lfloor (n+1)/2 \rfloor}=\frac{1}{2}$.
		\end{itemize}
	\end{enumerate}
\end{exem}

\subsection{t-norms and t-conorms}

Some aggregation functions provide models for conjunctions and disjunctions in fuzzy logic. These operators are called respectively of t-norms and t-conorms:

\begin{defi}[{\bf t-norms}]
	A {\bf t-norm} is a function \linebreak$T:[0,1]\times [0,1]\rightarrow [0,1]$ which satisfies:
	\begin{enumerate}
		\item[{\bf (T1)}] $T(x,1)=x$ for all $x\in[0,1]$;
		\item[{\bf (T2)}] $T(x,y)=T(y,x)$ for any $x,y\in[0,1]$;
		\item[{\bf (T3)}] $T(x,T(y,z))=T(T(x,y),z)$ for any $x,y,z\in[0,1]$;
		\item[{\bf (T4)}] $T(x,y)\le T(x,z)$ whenever $x\le z$.
	\end{enumerate}
\end{defi}

\begin{defi}[{\bf t-conorms}]
	A {\bf t-conorm} is a function \linebreak$S:[0,1]\times [0,1]\rightarrow [0,1]$ such that:
	\begin{enumerate}
		\item[{\bf (S1)}]$S(x,0)=x$ for any $x\in[0,1]$;
		\item[{\bf (S2)}] $S(x,y)=S(y,x)$ for all $x,y\in[0,1]$;
		\item[{\bf (S3)}] $S(x,S(y,z))=S(S(x,y),z)$ for all $x,y,z\in[0,1]$;
		\item[{\bf (S4)}] $S(x,y)\le S(x,z)$ whenever $x\le z$.
	\end{enumerate}
\end{defi}

T-norms are conjunctive and t-conorms are disjunctive aggregation functions. Table \ref{TableExTNorm} contains some examples of t-norms and t-conorms. 
The reader can find in \citep{Klement} a deeper insight about t-norms and t-conorms on $[0,1]$ and  in \citep{Bedregal2006,Mesiar,DeCooman1994,PALMEIRA2014} on some class of the lattices.

\begin{table*}[!htb]
	\centering
	\begin{tabular}{ll}
		\hline\
		{\bf t-norms} & {\bf t-conorms}\\
		\hline\
		$T_{min}(x,y)=\min(x,y)$ & $S_{max}(x,y)=\max(x,y)$\\
		$T_P(x,y)=x.y$ & $S_P(x,y)=x+y-xy$\\
		$T_{LK}(x,y)=\max(x+y-1,0)$ & $S_{LK}(x,y)=\min(x+y,1)$\\
		$T_D(x,y)=\left\{\begin{array}{ll} 0, &\mbox{ if } x,y\in[0,1)\\ \min(x,y), & \mbox{ otherwise } \end{array}\right.$ & $S_D(x,y)=\left\{\begin{array}{ll} 1, & \mbox{ if } x,y\in(0,1]\\ \max(x,y), & \mbox{ otherwise }\end{array}\right.$\\
		\hline
	\end{tabular}
	\caption{Examples of t-norms and t-conorms}\label{TableExTNorm}
\end{table*}

\vspace{-0.5cm}
\section{Aggregations, t-norms and t-conorms for complete lattices}

A {\bf complete lattice} is a  partial order, $\langle L, \le_L \rangle$, in which any subset $S\subseteq L$ has supremum and infimum elements, denoted 
respectively by $\bigvee S$ and $\bigwedge S$ \citep{Birkhoff,Gierz}. Complete lattices are bounded; i.e. they have  top, $\top_L$,  and bottom elements, $\bot_L$.

The properties {\bf (IP)}, {\bf (SP)}, {\bf (NP)}, {\bf (AP)}, {\bf (HP)}, {\bf (ZD)}, {\bf (OD)} and {\bf (ASP)} can be extended to lattices, as well as:  
aggregations, t-norms and t-conorms. 

\begin{defi}
	An isotonic function\footnote{A function $f:L^n\rightarrow L$ is isotonic if, $f(x_1,\cdots,x_n)\le_L f(y_1,\cdots,y_n)$, whenever $x_i\le_L y_i$, for all $(x_1,\cdots,x_n), (y_1,\cdots,y_n)\in L^n$.} $A:L^n\rightarrow L$ such that:
	\begin{enumerate}
		\item[{\bf (A1)}] $A(\bot_L,\cdots,\bot_L)=\bot_L$;
		\item[{\bf (A2)}] $A(\top_L,\cdots,\top_L)=\top_L$
	\end{enumerate} 
	is called of aggregation function on $L$.
\end{defi}

\begin{defi}[\citep{Mesiar}]
	\begin{enumerate}
		\item[]
		\item[(1)] An isotonic binary operator $\otimes:L\times L\rightarrow L$ which satisfies {\bf (SP)} and {\bf (ASP)}, and have $\top_L$ as neutral element is a t-norm on $L$.
		\item[(2)]  An isotonic binary operator $\oplus:L\times L\rightarrow L$ that satisfies {\bf (SP)} and {\bf (ASP)}, and have $\bot_L$ as neutral element is a t-conorm on $L$.
	\end{enumerate}
\end{defi}

The associativity of $\oplus$ and $\otimes$ allows us to define n-any operators, as follow:
$$\bigoplus\limits_{i=1}^n x_i = ((((x_1\oplus x_2)\oplus x_3)\oplus\cdots \oplus )x_{n-1})\oplus x_n;$$
\begin{center}
	and
\end{center}
$$\bigotimes\limits_{i=1}^n x_i = ((((x_1\otimes x_2)\otimes x_3)\otimes\cdots) \otimes x_{n-1})\otimes x_n.$$

\begin{prop}\label{prop:TnormProp}
	Let $\oplus,\otimes:L\times L\rightarrow L$ be a t-norm and a t-conorm on a complete lattice $L$. Then, for any $a,b,c\in L$ are valid:
	\begin{enumerate}
		\item[(i)] $a\otimes b\le_L a\otimes\top_L=a$ and $a=a\oplus\bot_L\le_L a\oplus b$;
		\item[(ii)] $a\otimes b\le_L a \wedge b$ and $a\vee b\le_L a\oplus b$;
		\item[(iii)] $a\otimes(b\otimes c))=(a\otimes b)\otimes c$ and $a\oplus (b\oplus c)=(a\oplus b)\oplus c$; 
		\item[(iv)] $a\otimes \bot_L=\bot_L=\bot_L\otimes a$.
	\end{enumerate}
\end{prop}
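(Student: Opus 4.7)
The plan is to peel off the four items essentially in the order stated, since each one builds on the preceding routine facts (isotonicity, symmetry, associativity, and the neutral element property) that are hardwired into the definition of a t-norm and t-conorm on $L$.

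For part (i), I would start with the observation that $b \le_L \top_L$ holds for every $b \in L$ simply because $\top_L$ is the top element of the lattice. Applying isotonicity of $\otimes$ in the second coordinate (with the first coordinate fixed at $a$) gives $a \otimes b \le_L a \otimes \top_L$, and the equality $a \otimes \top_L = a$ is exactly the neutral element property of $\top_L$ for $\otimes$. The dual argument on $\oplus$ using $\bot_L \le_L b$ and the fact that $\bot_L$ is the neutral element of $\oplus$ yields $a = a \oplus \bot_L \le_L a \oplus b$.

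For part (ii), I would reuse (i). It already gives $a \otimes b \le_L a$. Applying symmetry (SP) of $\otimes$ we have $a \otimes b = b \otimes a$, and then (i) again (with the roles of $a$ and $b$ swapped) gives $b \otimes a \le_L b$. Since $a \otimes b$ lies below both $a$ and $b$, it lies below their infimum $a \wedge b$ in the lattice. The inequality $a \vee b \le_L a \oplus b$ is proved by the completely dual argument on $\oplus$.

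For part (iii), there is nothing to do beyond noting that associativity (ASP) is part of the definition of a t-norm and a t-conorm on $L$; the two identities are literally restatements of that axiom. For part (iv), I would apply (ii) with $b = \bot_L$, obtaining $a \otimes \bot_L \le_L a \wedge \bot_L = \bot_L$, and combine it with the trivial lower bound $\bot_L \le_L a \otimes \bot_L$ to conclude $a \otimes \bot_L = \bot_L$; then symmetry gives $\bot_L \otimes a = \bot_L$ as well. I do not foresee any real obstacle: the only mildly delicate point is making sure each inequality is justified by the right axiom (isotonicity vs.\ neutral element vs.\ symmetry), and the dualities between $\otimes$ and $\oplus$ keep the bookkeeping short.
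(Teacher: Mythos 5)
Your proof is correct, and each step is justified by exactly the right axiom from the paper's definition of t-norms and t-conorms on $L$ (isotonicity, symmetry, associativity, and the neutral elements $\top_L$ and $\bot_L$). The paper itself states this proposition without proof, treating it as routine; your argument is the standard one that fills that gap, so there is nothing to compare against and no issue to flag.
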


\vspace{-0.5cm}
\section{OWA operators for complete lattices}

Several variations of OWA functions  defined on the interval  $[0,1]$ can be found in literature; e.g.  \textbf{IGOWA}, \textbf{IGCOWA}, 
\textbf{POWA} and \textbf{cOWA}  \citep{Merigo1,Merigo2,Yager2006,Chen}. Another approach is due to Lizasoain and Moreno \citep{Lizasoain} which 
generalized OWAs to complete lattices.

\begin{defi}[Definition 3.3 of \citep{Lizasoain}]
	Let $L$  be a complete lattice and $\otimes,\oplus:L\times L\rightarrow L$ be a t-norm and a t-conorm. We say that $(w_1,\cdots,w_n)\in L^n$ is a:
	\begin{enumerate}
		\item[(i)] vector of weights on $\langle L,\bigoplus,\otimes\rangle$ whenever $\bigoplus\limits_{i=1}^n w_i=\top_L$;
		\item[(ii)] distributive vector of weights on $\langle L,\bigoplus,\otimes \rangle$ whenever it satisfies the (i) and  $$a\otimes \left(\bigoplus\limits_{i=1}^n w_i\right)=\bigoplus\limits_{i=1}^n(a\otimes w_i) \mbox{, for any } a\in L^n.$$
	\end{enumerate}
\end{defi}

\begin{obs}
	If $L$ is a complete lattice and $x\oplus y=x\vee y$ and $x\otimes y=x\wedge y$, then $(w_1,\cdots,w_n)$ is a:
	\begin{enumerate}
		\item[(1)] vector of weights if, and only if, $w_1\vee \cdots\vee w_n=\top_L$;
		\item[(2)] distributive vector of weights if, and only if, satisfies $(2)$ and $a\wedge(w_1\vee\cdots\vee w_n)=(a\wedge w_1)\vee\cdots\vee(a\wedge w_n)$, for all $a\in L$.
	\end{enumerate}
\end{obs}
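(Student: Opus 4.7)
The plan is to verify both equivalences by directly specializing the general definition of (distributive) vector of weights to the case in which the chosen t-conorm is the binary join $\vee$ and the chosen t-norm is the binary meet $\wedge$. First I would remark that these indeed satisfy the lattice versions of \textbf{(SP)} and \textbf{(ASP)}, are isotonic, and have $\bot_L$ and $\top_L$ as respective neutral elements, so they are legitimate instances of the setting of the previous definition. Hence the formulas $\bigoplus_{i=1}^{n} w_i$ and $a \otimes \bigl(\bigoplus_{i=1}^{n} w_i\bigr)$ make sense and can be simplified in this context.

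For item (1), the key observation is a short induction on $n$ using the associativity of $\vee$: in the notation of the paper, $\bigoplus_{i=1}^{n} w_i = \bigl(\cdots\bigl((w_1 \vee w_2) \vee w_3\bigr) \cdots\bigr) \vee w_n = w_1 \vee w_2 \vee \cdots \vee w_n$. Substituting this into the defining identity $\bigoplus_{i=1}^{n} w_i = \top_L$ yields exactly $w_1 \vee \cdots \vee w_n = \top_L$, and conversely; this is the claimed ``if and only if''.

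For item (2), I would repeat the same reduction on both sides of the distributivity clause. The left-hand side $a \otimes \bigl(\bigoplus_{i=1}^{n} w_i\bigr)$ becomes $a \wedge (w_1 \vee \cdots \vee w_n)$ by the computation above, while an analogous induction gives $\bigoplus_{i=1}^{n}(a \otimes w_i) = (a \wedge w_1) \vee \cdots \vee (a \wedge w_n)$. Therefore the additional axiom of a \emph{distributive} vector of weights collapses precisely to the equation displayed in (2). Together with (1) (the reference to ``$(2)$'' in the statement is an evident misprint for ``$(1)$''), this proves the second equivalence.

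There is essentially no obstacle here beyond bookkeeping: the whole content is that the general $\oplus$/$\otimes$ notation \emph{unfolds} to the familiar iterated join and meet. The one point worth flagging is that the identity $a \wedge (w_1 \vee \cdots \vee w_n) = (a \wedge w_1) \vee \cdots \vee (a \wedge w_n)$ is not automatic in every complete lattice, so the condition in (2) is a genuine constraint and not vacuous; this is precisely why the remark is useful, as it translates the abstract distributivity axiom into a concrete finite distributive law familiar from lattice theory.
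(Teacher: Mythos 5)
Your proposal is correct: the paper states this as a remark without proof precisely because it is an immediate unfolding of the definitions once $\oplus$ and $\otimes$ are instantiated as $\vee$ and $\wedge$, which is exactly what you do (and you rightly note that the ``$(2)$'' in item (2) should read ``$(1)$''). Nothing further is needed.
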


To calculate the output of an OWA (in the sense of Yager), we need to sort the {\it n}-dimensional input vector in a decreasing way. This process is 
always possible when the underlying complete lattice is  the linear order, but there are complete lattices with pairs of non-comparable elements. 
For this reason, we need to define an auxiliary vector from the input vector. This is done using the following Lemma:

\begin{lema}[Lemma 3.1 of \citep{Lizasoain}]
	Let $L$ be a complete lattice. For any $(a_1,\cdots,a_n)\in L^n$, consider the following values:
	\begin{itemize}
		\item $b_1=a_1\vee\cdots \vee a_n$
		\item $b_2=[(a_1\wedge a_2)\vee\cdots\vee(a_1\wedge a_n)]\vee[(a_2\wedge a_3)\vee\cdots(a_2\wedge a_n)]\vee\cdots\vee[a_{n-1}\wedge a_n]$
		\item $b_3 = \bigvee \{a_1\wedge a_2\wedge a_3, a_1\wedge a_2\wedge a_4, a_1\wedge a_3\wedge a_4,\cdots, a_{n-2}\wedge a_{n-1}\wedge a_n\}$
		\item $\vdots$
		\item $b_k=\bigvee\limits_{(j_1,\ldots,j_k)\subseteq \{1,\cdots,n\}} a_{j_1}\wedge\cdots \wedge a_{j_k}$
		\item $\vdots$
		\item $b_n=a_1\wedge\cdots \wedge a_n$.
	\end{itemize}
	Then, $a_1\wedge\cdots \wedge a_n=b_n\le b_{n-1}\le\cdots\le b_1=a_1\vee\cdots\vee a_m$.  If  $\{a_1,\cdots,a_n\}$ is totally ordered, then there is a permutation, $\sigma$, for the set $\{1,\cdots,n\}$, such that $(b_1,\cdots,b_n)=(a_{\sigma(1)},\cdots,a_{\sigma(n)})$.
\end{lema}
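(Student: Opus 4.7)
The plan is to prove the two claims separately: first the chain of inequalities $b_n \le b_{n-1} \le \cdots \le b_1$, and then the permutation statement under the hypothesis that $\{a_1,\ldots,a_n\}$ is totally ordered.

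For the chain, the basic observation is monotonicity of meets: dropping a conjunct can only make the meet larger. More precisely, I would fix $k$ with $1 \le k < n$ and take any $(k+1)$-subset $\{j_1,\ldots,j_{k+1}\} \subseteq \{1,\ldots,n\}$. Then $a_{j_1}\wedge\cdots\wedge a_{j_{k+1}} \le_L a_{j_1}\wedge\cdots\wedge a_{j_k}$, and the right-hand side is one of the meets whose join defines $b_k$, so it is $\le_L b_k$. Since each generator of the join defining $b_{k+1}$ is bounded above by $b_k$, we get $b_{k+1} \le_L b_k$. The extreme cases $b_1 = a_1 \vee \cdots \vee a_n$ and $b_n = a_1 \wedge \cdots \wedge a_n$ are immediate from the definitions.

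For the totally-ordered case, the goal is to show $b_k = a_{\sigma(k)}$ where $\sigma$ is a permutation that arranges the $a_i$ in decreasing order, i.e.\ $a_{\sigma(1)} \ge_L a_{\sigma(2)} \ge_L \cdots \ge_L a_{\sigma(n)}$. The key point is that in a totally ordered set, the meet of a finite subset equals its smallest element. Thus, each generator of $b_k$ is of the form $\min\{a_{j_1},\ldots,a_{j_k}\}$ (with respect to $\le_L$), and $b_k$ is the maximum of these minima over all $k$-subsets. I would first show that $a_{\sigma(k)}$ is attained: the subset $\{\sigma(1),\ldots,\sigma(k)\}$ has minimum $a_{\sigma(k)}$, so $a_{\sigma(k)} \le_L b_k$. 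Conversely, any $k$-subset $\{j_1,\ldots,j_k\}$ must contain at least one index whose rank under $\sigma$ is $\ge k$ (a pigeonhole argument: there are only $k-1$ indices with rank strictly less than $k$), hence its minimum is $\le_L a_{\sigma(k)}$. Taking the join over all $k$-subsets yields $b_k \le_L a_{\sigma(k)}$, giving equality.

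The main obstacle is the pigeonhole step, which must be phrased carefully because the map from indices to the value $a_i$ need not be injective when some coordinates repeat. This is handled by working with indices in $\{1,\ldots,n\}$ rather than with the underlying elements: any $k$ distinct indices must include some index $j$ with $\sigma^{-1}(j) \ge k$, and since $\sigma$ lists elements in decreasing order, $a_j \le_L a_{\sigma(k)}$ for every such $j$. Apart from that, everything reduces to monotonicity of $\wedge$ and $\vee$ on a complete lattice.
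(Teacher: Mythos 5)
Your proof is correct. Note that the paper does not actually prove this lemma --- it simply cites Lizasoain and Moreno for the proof --- so there is nothing internal to compare against; your argument (monotonicity of meets under dropping conjuncts for the chain $b_{k+1}\le_L b_k$, and the max-of-mins characterization of the $k$-th largest element via pigeonhole on $\sigma$-ranks for the totally ordered case) is the standard one and is carried out soundly, including the care you take to work with indices rather than values when coordinates may repeat.
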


A proof of this Lemma can be found in \citep{Lizasoain}. To simplify the notations we use the following definition:

\begin{defi}
	If $L$ is a complete lattice, then the function $\mathscr{L}\!\mathscr{M}:L^n\rightarrow L^n$ defined by:
	$$\mathscr{L}\!\mathscr{M}(a_1,a_2,\cdots,a_n)=(b_1,b_2,\cdots,b_n),$$
	where $(b_1,b_2,\cdots,b_n)$ is the $n$-dimentional vector obtained according to Lemma 1, is called of Lizassoain-Moreno function. 
\end{defi}

\begin{exem}
	If $L$ is a complete lattice and $a_1,a_2,a_3,a_4\in L$, then:
	\begin{eqnarray*}
		b_1 & = & a_1\vee a_2\vee a_3\vee a_4\\
		b_2 & = & \bigvee\{a_1\wedge a_2, a_1\wedge a_3, a_1\wedge a_4, a_2\wedge a_3,\\ & & \ \ \ \ \ \ \ \ \ \ \ \ \ \ \ \ \ \ \ \ \ \ \ \ \ \ \ \ \ \ \ \ \ \ \ \ \ \ \ \ \ \ \ a_2\wedge a_4, a_3\wedge a_4\}\\
		b_3 & = & \bigvee \{a_1\wedge a_2\wedge a_3, a_1\wedge a_2\wedge a_4, a_1\wedge a_3\wedge a_4,\\ & & \ \ \ \ \ \ \ \ \ \ \ \ \ \ \ \ \ \ \ \ \ \ \ \ \ \ \ \ \ \ \ \ \ \ \ \ \ \ \ \ \ \ \ \ \ \ \ \ \  a_2\wedge a_3\wedge a_4 \}\\
		b_4 & = & a_1\wedge a_2\wedge a_3\wedge a_4
	\end{eqnarray*}
\end{exem}

In the following, we list some properties of Lizassoain-Moreno function:

\begin{prop}[{\bf Properties of Lizassoain-Moreno function}]
	If $L$ is a complete lattice, then:
	\begin{enumerate}
		\item[(i)] If $(a_1,\cdots,a_n)\in L^n$ is such that any pair of coordinates $a_i$ and $a_j$, with $i\ne j$, is comparable, then 
		$\mathscr{L}\!\mathscr{M}(a_1,\cdots,a_n)=(a_{\sigma(1)},\cdots,a_{\sigma(n)})$, where $\sigma$ is a permutation on the set 
		$\{1,2,\cdots,n\}$ such that $(a_{\sigma(1)},\ldots,$\linebreak$a_{\sigma(n)})=Sort(a_1,\ldots,a_n)$.
		\item[(ii)] If $L$ is a linear order, then for all $(a_1,\cdots,a_n)\in L^n$ there is a permutation $\sigma$ on the set 
		$\{1,2,\cdots,n\}$ such that $\mathscr{L}\!\mathscr{M}(a_1,\cdots,a_n)=(a_{\sigma(1)},\cdots,a_{\sigma(n)})$.
		\item[(iii)] If $a_1\geq a_2\geq\cdots\geq a_n$, then $\mathscr{L}\!\mathscr{M}(a_1,\cdots,a_n)=(a_1,\cdots,a_n)$. 
		\item[(iv)] $\mathscr{L}\!\mathscr{M}\circ\mathscr{L}\!\mathscr{M}=\mathscr{LM}$
		\item[(v)] For any permutation $\sigma$ for  $\{1,\cdots,n\}$ and for all $(a_1,\cdots,$\linebreak$a_n)\in L^n$,  $\mathscr{L}\!\mathscr{M}(a_{\sigma(1)},\cdots,a_{\sigma(n)})=\mathscr{L}\!\mathscr{M}(a_1,\cdots,a_n)$.
	\end{enumerate}
\end{prop}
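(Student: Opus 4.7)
The plan is to prove items (i)--(v) in order, doing the real work in (i) and then deriving the rest as brief corollaries or symmetry observations.

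For (i), assume every pair of coordinates of $(a_1,\ldots,a_n)$ is comparable, so that $\{a_1,\ldots,a_n\}$ is a totally ordered subchain of $L$. Pick a permutation $\sigma$ of $\{1,\ldots,n\}$ with $a_{\sigma(1)}\geq a_{\sigma(2)}\geq\cdots\geq a_{\sigma(n)}$ and set $c_i=a_{\sigma(i)}$. I will then verify by direct computation that the expression defining $b_k$ evaluates to $c_k$. The key observation is that in a chain the meet of any finite family is its minimum: for each $k$-subset $\{j_1<\cdots<j_k\}$ of $\{1,\ldots,n\}$ one has $c_{j_1}\wedge\cdots\wedge c_{j_k}=c_{j_k}$. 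Hence
$$b_k=\bigvee\bigl\{c_{j_k}\;:\;\{j_1<\cdots<j_k\}\subseteq\{1,\ldots,n\}\bigr\},$$
and the smallest possible value of $j_k$ is $k$ (attained by $\{1,\ldots,k\}$), while any other $k$-subset gives $j_k>k$ and hence $c_{j_k}\leq c_k$. Therefore the supremum is $c_k=a_{\sigma(k)}$, which is the desired identity.

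Items (ii) and (iii) are immediate specializations. In a linear order every pair of elements is comparable, so (ii) is a direct instance of (i). Likewise, if $a_1\geq a_2\geq\cdots\geq a_n$, then the chain hypothesis of (i) holds and $\sigma$ may be taken to be the identity, which yields (iii). For (iv), the preceding Lemma guarantees that the output vector $(b_1,\ldots,b_n)=\mathscr{L}\!\mathscr{M}(a_1,\ldots,a_n)$ always satisfies $b_1\geq b_2\geq\cdots\geq b_n$; applying (iii) to this decreasing sequence gives $\mathscr{L}\!\mathscr{M}(b_1,\ldots,b_n)=(b_1,\ldots,b_n)$, which is exactly the idempotency statement $\mathscr{L}\!\mathscr{M}\circ\mathscr{L}\!\mathscr{M}=\mathscr{L}\!\mathscr{M}$.

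Item (v) is a pure symmetry argument: the defining formula
$$b_k=\bigvee_{\{j_1,\ldots,j_k\}\subseteq\{1,\ldots,n\}} a_{j_1}\wedge\cdots\wedge a_{j_k}$$
is a supremum taken over the collection of all $k$-element subsets of the index set, and this collection is manifestly invariant under any permutation of $\{1,\ldots,n\}$, so the value of $b_k$ is unchanged when the arguments are permuted. I expect the only genuine obstacle to be the combinatorial step inside (i), namely the careful verification that, once the chain is sorted in decreasing order, the join over all $k$-subsets of the corresponding minima picks out precisely the $k$-th largest element; every other part of the proposition then follows with minimal additional effort.
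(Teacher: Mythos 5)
Your proof is correct and follows essentially the same route as the paper: items (ii)--(v) are handled identically, and for (i) you supply the combinatorial verification (meet of a $k$-subset of a chain is its minimum, and the join of those minima over all $k$-subsets is the $k$-th largest element) that the paper simply delegates to the cited Lemma~1 of Lizasoain--Moreno. The only presentational caveat is that your computation of $b_k$ in (i) is carried out on the sorted vector $(c_1,\ldots,c_n)$ rather than on $(a_1,\ldots,a_n)$ directly, which tacitly uses the permutation invariance you establish in (v); since your proof of (v) is independent of (i), this creates no circularity, but stating (v) first would make the order of dependence explicit.
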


\begin{proof}\hfill
	\begin{enumerate}
		\item[(i)] Straightforward from Lemma 1.
		\item[(ii)] Since $L$ is a linear order, then any $(a_1,\cdots,a_n)$ satisfies $(i)$. Thus, for any $(a_1,\cdots,a_n)$ there is a permutaion $\sigma$ such that $\mathscr{L}\!\mathscr{M}(a_1,\cdots,a_n)=(a_{\sigma(1)},\cdots,a_{\sigma(n)})$.
		\item[(iii)] Straightforward from definition of Lizassoain and Moreno funcion.
		\item[(iv)] As $\mathscr{L}\!\mathscr{M}(a_1,\cdots,a_n)=(b_1,\cdots,b_n)$, where $b_1\geq b_2\geq \cdots\geq b_n$ then, by $(iii)$ follows that for all $(a_1,\cdots,a_n)\in L^n$,
		\begin{eqnarray*}
			\mathscr{L}\!\mathscr{M}(\mathscr{L}\!\mathscr{M}(a_1,\cdots,a_n)) & = &\mathscr{L}\!\mathscr{M}(b_1,\cdots,b_n)\\
			& = & b_1,\cdots,b_n\\
			& = & \mathscr{L}\!\mathscr{M}(a_1,\cdots,a_n)
		\end{eqnarray*}
		
		\item[(v)] Let be $\mathscr{L}\!\mathscr{M}(a_1,\cdots,a_n)=(b_1,\cdots,b_n)$ and $\mathscr{L}\!\mathscr{M}(a_{\sigma(1)},$ $\cdots,a_{\sigma(n)})=(c_1,\cdots,c_n)$. By definition, we have to:
		\small\begin{eqnarray*}
			c_k & = & \bigvee_{(j_1,\ldots,j_k)\subseteq \{1,\cdots,n\}} a_{\sigma(j_1)}\wedge\cdots\wedge a_{\sigma(j_k)}\\
			& = & \bigvee_{(j_1,\ldots,j_k)\subseteq \{1,\cdots,n\}} a_{j_1}\wedge\cdots\wedge a_{j_k}\\
			& = & b_k
		\end{eqnarray*}
		Therefore, $\mathscr{L}\!\mathscr{M}(a_{\sigma(1)},\cdots,a_{\sigma(n)})=\mathscr{L}\!\mathscr{M}(a_1,\cdots,a_n)$.
	\end{enumerate}
\end{proof}

Now that we have defined the Lizassoain-Moreno function and know some of its properties, it is possible to define a generalized version of \textsf{OWA}s for complete lattices:

\begin{defi}[Definition 3.5 of \citep{Lizasoain}]
	Let $\overrightarrow{w}{=}(w_1\ldots,w_n)$ $\in L^n$ be a distributive vector of weights in a complete lattice $\langle L,\bigoplus,\otimes\rangle$. For any $\overrightarrow{a}=(a_1,\cdots,a_n)\in L^n$, consider the totally decreasing ordered vector $(b_1,\cdots,b_n)=\mathscr{L}\!\mathscr{M}(a_1,\cdots,a_n)$. The Lizasoain-Moreno \textsf{OWA} function  associated with $\overrightarrow{w}$ and the triplet $\langle L, \bigoplus,\otimes\rangle$ is 
	\begin{equation}
		LMOWA_{\overrightarrow{w}}(\overrightarrow{a})=\bigoplus\limits_{i=1}^n(w_i\otimes b_i)
	\end{equation}
\end{defi}

Examples and properties of Lizasoain-Moreno OWA can be found in \citep{Lizasoain}, it is noteworthy  that Yager's OWA is a particular case of Lizasoain-Moreno's OWA:
	
\begin{teo}\label{th:YagerIsLM}
	Every  Yager's \textsf{OWA} is an \textsf{LMOWA}. 
\end{teo}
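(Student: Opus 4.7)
The plan is to realize any Yager \textsf{OWA} as an \textsf{LMOWA} by taking the complete lattice $L=[0,1]$ with its usual linear order and choosing a t-norm and t-conorm for which the formula $\bigoplus(w_i\otimes b_i)$ reduces to the arithmetic sum $\sum w_i x_{(i)}$. Concretely, I would take $\otimes=T_P$, the product t-norm $T_P(x,y)=xy$, and $\oplus=S_{LK}$, the \L ukasiewicz t-conorm $S_{LK}(x,y)=\min(x+y,1)$. Both appear in Table~\ref{TableExTNorm}, so we may use them freely.

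Next I would verify that a Yager weight vector $\overrightarrow{w}=(w_1,\dots,w_n)\in[0,1]^n$ with $\sum_{i=1}^n w_i=1$ is a distributive vector of weights for $\langle[0,1],\oplus,\otimes\rangle$. Because each partial sum $w_1+\cdots+w_k\le 1$, the nested \L ukasiewicz sum never activates its truncation, so
\[
\bigoplus_{i=1}^{n} w_i \;=\; \min\!\Bigl(\sum_{i=1}^{n} w_i,\,1\Bigr) \;=\; 1 \;=\; \top_{L},
\]
which gives (i) of the definition. For the distributivity condition (ii), for any $a\in[0,1]$ we have on the one hand $a\otimes\bigoplus_{i}w_i=a\cdot 1=a$, and on the other hand each $a\,w_i\le w_i$ with $\sum_i a\,w_i=a\le 1$, so $\bigoplus_{i=1}^n(a\otimes w_i)=\min(\sum_i a w_i,1)=a$. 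Thus $\overrightarrow{w}$ is distributive.

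Now I would use the fact that $[0,1]$ is linearly ordered, together with item (ii) of Proposition~1 (properties of $\mathscr{L}\!\mathscr{M}$), to conclude that for any $\overrightarrow{a}\in[0,1]^n$ the Lizasoain--Moreno function returns a permutation of $\overrightarrow{a}$; by item (iii) of the same proposition this permutation is precisely the decreasing rearrangement, i.e. $\mathscr{L}\!\mathscr{M}(\overrightarrow{a})=(x_{(1)},\dots,x_{(n)})=\mathrm{Sort}(\overrightarrow{a})$. Combining everything,
\[
LMOWA_{\overrightarrow{w}}(\overrightarrow{a}) \;=\; \bigoplus_{i=1}^{n}\bigl(w_i\otimes x_{(i)}\bigr) \;=\; \min\!\Bigl(\sum_{i=1}^{n} w_i x_{(i)},\,1\Bigr) \;=\; \sum_{i=1}^{n} w_i x_{(i)} \;=\; OWA_{\overrightarrow{w}}(\overrightarrow{a}),
\]
where the next-to-last equality uses $\sum_i w_i x_{(i)}\le\sum_i w_i=1$ to drop the truncation.

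The main obstacle is essentially bookkeeping: one must pick operators on $[0,1]$ that both (a) genuinely are t-norm and t-conorm and (b) evaluate to the ordinary product and the ordinary sum on the inputs that actually occur in the \textsf{OWA} computation. The choice $(T_P,S_{LK})$ works because the weight constraint $\sum w_i=1$ keeps every intermediate \L ukasiewicz sum inside $[0,1]$, so the min-with-$1$ never bites; the distributivity check then reduces to the identity $a\cdot\sum_i w_i=\sum_i a w_i$. No deeper lattice-theoretic argument is needed.
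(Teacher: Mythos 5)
Your proposal is correct and follows essentially the same route as the paper: the same choice $L=[0,1]$, $\otimes=T_P$, $\oplus=S_{LK}$, the same verification that a Yager weight vector is a distributive vector of weights, and the same identification of $\mathscr{L}\!\mathscr{M}(\overrightarrow{a})$ with the decreasing rearrangement before evaluating the \L ukasiewicz sum. Your write-up is in fact somewhat more explicit than the paper's (you spell out why the truncation in $S_{LK}$ never activates and check the distributivity identity in detail), but the argument is the same.
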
 

\begin{proof}
	Let $L=[0,1]$, $\otimes=T_p$, $\oplus=S_{LK}$ and  $\overrightarrow{w}=(w_1,\cdots,w_n)\in[0,1]^n$ be a vector of weights. Since, \linebreak $S_{LK}(w_1,\cdots,w_n)=\min(w_1+\cdots+w_n,1)=1$ and 
	$c=T_P(c,S_{LK}(w_1,\cdots,w_n))=S_{LK}(T_P(c,w_i))$, for all $c\in [0,1]$,
	then $\overrightarrow{w}=(w_1,\cdots,w_n)\in[0,1]^n$ is a distributive vector of weights.
	
	To prove that $LMOWA_{\overrightarrow{w}}$ coincides with  the Yager's \textsf{OWA}, first note that the Proposition 2 ensures that for any input vector $(x_1,\cdots,x_n)$ there is a permutation $\sigma$ on $\{1,2,\cdots,n\}$ such that $(b_1,\cdots,b_n)=(x_{\sigma(1)},\cdots,x_{\sigma(n)})$. Besides,
	$$x_{\sigma(1)}\geq x_{\sigma(s)}\geq \cdots \geq x_{\sigma(n)},$$
	that is, $(b_1,\cdots,b_n)=(x_{(1)},\cdots, x_{(n)})$. Therefore, for all $(x_1,\cdots,x_n)\in[0,1]^n$ it is verified that:
	\begin{eqnarray*}
		LMOWA_{\overrightarrow{w}}(x_1,\cdots,x_n)& = &\bigoplus\limits_{i=1}^n(w_i\otimes b_i)\\
		& = & S_{LK}(w_1\otimes b_1,\ldots, w_n\otimes b_n)\\
		& = & \min(\sum\limits_{i=1}^n(w_i\cdot x_{\sigma(i)}),1)\\
		& = & \sum\limits_{i=1}^n(w_i\cdot x_{\sigma(i)})\\
		& = & OWA_{\overrightarrow{w}}(x_1,\cdots,x_n)\\
		&    &\\
	\end{eqnarray*}

\end{proof}

\begin{obs}
	\begin{enumerate}
		\item[]
		\item \textsf{OWA}'s satisfies the properties {\bf (IP)} and {\bf (SP)}. Futhermore, for any distributive vector of weights $\overrightarrow{w}\in L^n$ and all $(a_1,\cdots,a_n)\in L^n$ we have
		$$a_1\wedge \cdots \wedge a_n\le OWA_{\overrightarrow{w}}(a_1,\cdots,a_n)\le a_1\vee\cdots\vee a_n$$
	\end{enumerate}
\end{obs}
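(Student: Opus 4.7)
The statement packages three assertions about $LMOWA_{\overrightarrow{w}}$ for a distributive weight vector: idempotency (IP), symmetry (SP), and the averaging bounds $a_1\wedge\cdots\wedge a_n\le LMOWA_{\overrightarrow{w}}(\overrightarrow{a})\le a_1\vee\cdots\vee a_n$. My plan is to reduce each one to a combination of two ingredients already available: the structural properties of $\mathscr{L}\!\mathscr{M}$ collected in Proposition 2, and the defining identity $a\otimes\bigl(\bigoplus_{i=1}^n w_i\bigr)=\bigoplus_{i=1}^n(a\otimes w_i)$ of a distributive vector of weights, together with the fact that $\bigoplus_{i=1}^n w_i=\top_L$.

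For (IP), I would evaluate $\mathscr{L}\!\mathscr{M}$ at the constant input $(a,\ldots,a)$. From the formula for $b_k$ in Lemma 1 it is immediate that every $b_k$ collapses to $a$, since each meet of copies of $a$ is $a$ and the supremum of copies of $a$ is $a$. Substituting into the definition gives $LMOWA_{\overrightarrow{w}}(a,\ldots,a)=\bigoplus_{i=1}^n(w_i\otimes a)$, and then commutativity of $\otimes$ together with distributivity of $\overrightarrow{w}$ rewrites this as $a\otimes\bigl(\bigoplus_{i=1}^n w_i\bigr)=a\otimes\top_L=a$. For (SP), I would simply quote Proposition 2(v): $\mathscr{L}\!\mathscr{M}$ is invariant under permutations of its arguments, so the vector $(b_1,\ldots,b_n)$ fed into $\bigoplus_{i=1}^n(w_i\otimes b_i)$ does not change when the $a_i$'s are reshuffled, and (SP) follows at once.

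For the bounds, the key observation is that Lemma 1 already gives $b_n\le b_i\le b_1$ for every $i$, with $b_1=a_1\vee\cdots\vee a_n$ and $b_n=a_1\wedge\cdots\wedge a_n$. Applying isotonicity of $\otimes$ in its second argument (Proposition 1 style) yields $w_i\otimes b_n\le w_i\otimes b_i\le w_i\otimes b_1$, and then isotonicity of $\oplus$ in each argument propagates these inequalities through the $n$-ary $\bigoplus$ to give
\[
\bigoplus_{i=1}^n(w_i\otimes b_n)\;\le\;\bigoplus_{i=1}^n(w_i\otimes b_i)\;\le\;\bigoplus_{i=1}^n(w_i\otimes b_1).
\]
The two outer expressions are exactly of the form $c\otimes\bigoplus_i w_i$ (after using commutativity of $\otimes$), so the distributive vector property collapses them to $b_n\otimes\top_L=b_n$ on the left and $b_1\otimes\top_L=b_1$ on the right, which is precisely the desired averaging inequality.

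The only step that requires genuine care is the last one: to turn $\bigoplus_i(w_i\otimes c)$ into $c\otimes\bigoplus_i w_i$ one uses commutativity of $\otimes$ coordinate-wise to get $\bigoplus_i(c\otimes w_i)$, and only then invokes the defining distributive identity. I also want to verify that isotonicity of $\oplus$ extends from the binary to the $n$-ary form; this is routine by induction on $n$ using associativity (Proposition 1(iii)), so it is not a real obstacle, but it is the place where one must be explicit in the writeup.
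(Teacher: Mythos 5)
Your proof is correct, and it is essentially the argument the paper itself gives for the more general DYOWA case (Propositions 3 and 4), specialized to constant weight functions: (IP) via collapsing $\mathscr{L}\!\mathscr{M}(a,\ldots,a)$ and the distributivity identity, (SP) via permutation-invariance of $\mathscr{L}\!\mathscr{M}$ from Proposition 2(v), and the averaging bounds via $b_n\le b_i\le b_1$, isotonicity, and distributivity to collapse the outer sums. Your explicit remark that the upper bound genuinely needs the distributive identity (rather than mere isotonicity of $\oplus$) is a point the paper glosses over but is exactly right.
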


Now, observe that both: Yager's and Lizasoain-Moreno's \textsf{OWA}s   are obtained from a unique \textit{fixed} vector of weights $\overrightarrow{w}$. In \citep{FariasDYOWA,Farias2} we propose a generalization of Yager's \textsf{OWA}, in such a way that the weights are not fixed. In this sense, we propose here a generalization of Lizasoain-Moreno's \textsf{OWA} taking into account  nonfixed weights.

\vspace{-0.5cm}
\section{Dynamic Ordered Weighted Averaging Functions}

In the sequel we propose and investigate a generalized form of \textsf{OWA} for complete lattices; they are named  {\bf Dynamic Ordered Weighted Averaging (DYOWA)} functions. The \textbf{DYOWA} functions generalize both Yager's and  Lizasoain-Moreno's \textsf{OWA}. In order to to introduce them,  we need first to define the notion of  \textit{weights function}.

\begin{defi}[{\bf Weight function}]
	Let be the structure $\langle L,\bigoplus,$\linebreak $\otimes\rangle$, where $L$ is a complete lattice, $\otimes:L^2\rightarrow L$ is a t-norm and $\oplus:L^2\rightarrow L$ is a t-conorm. A  finite family of functions $\Gamma =\{f_i:L^n\rightarrow L:\ i=1,2,\cdots,n\}$ is called of {\bf weight function family} whenever for all $\overrightarrow{a}\in L^n$, 
	$(f_1(\overrightarrow{a}),\cdots,f_n(\overrightarrow{a}))$ is a vector of weights 
	on $\langle L,\bigoplus , \otimes\rangle$. The natural function $f:L^n\rightarrow L^n$, s.t. $f(\overrightarrow{a})=(f_1(\overrightarrow{a}),\cdots,$\linebreak $f_n(\overrightarrow{a}))$ is called {\bf weight function} on $\Gamma$\ \footnote{Or just weight function whenever $\Gamma$ is clear in the context.}.
	If furthermore,  for all  $c\in L$,
	$$c\otimes\left( \bigoplus\limits_{i=1}^n f_i(\overrightarrow{a}) \right)=\bigoplus\limits_{i=1}^n(c\otimes f_i({\overrightarrow{a}}))$$
	$f$ is called {\bf distributive weight function} on $\Gamma$ or simply that $\Gamma$  is a \textbf{distributive family}. 
\end{defi}

	\begin{obs}
		Since $\bigoplus\limits_{i=1}^n f_i(\overrightarrow{a})=\top_L$ then, by (T1), $c=c\otimes\left( \bigoplus\limits_{i=1}^n f_i(\overrightarrow{a}) \right)$.
	\end{obs}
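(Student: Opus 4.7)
The statement to justify is essentially the identity
$c = c \otimes \bigl(\bigoplus_{i=1}^n f_i(\overrightarrow{a})\bigr)$,
which amounts to observing that the right-hand side collapses to $c \otimes \top_L$, and then that $c \otimes \top_L = c$. So the plan is a two-step chain of substitutions, both of which are already licensed by earlier definitions in the paper.

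First I would unpack the hypothesis that $\Gamma = \{f_i : L^n \to L \mid i = 1, \ldots, n\}$ is a weight function family. By the definition of weight function family given just above the remark, for every $\overrightarrow{a} \in L^n$ the tuple $(f_1(\overrightarrow{a}), \ldots, f_n(\overrightarrow{a}))$ is a vector of weights on the structure $\langle L, \bigoplus, \otimes \rangle$. By Definition of vector of weights (Definition 3.3 of \citep{Lizasoain} as restated in the excerpt), this means precisely that
\[
\bigoplus_{i=1}^{n} f_i(\overrightarrow{a}) \;=\; \top_L .
\]
This is the only content of the first half of the remark, and it is immediate from the definitions.

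Second, I would invoke property (T1) in its lattice formulation, which says that $\top_L$ is the neutral element of the t-norm $\otimes$, i.e., $c \otimes \top_L = c$ for every $c \in L$. Substituting the equality from the previous step into the left-hand side gives
\[
c \otimes \Bigl(\bigoplus_{i=1}^{n} f_i(\overrightarrow{a})\Bigr) \;=\; c \otimes \top_L \;=\; c,
\]
which is exactly the claim of the remark.

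There is no real obstacle here; the remark is just recording that, under the hypothesis that each $(f_1(\overrightarrow{a}), \ldots, f_n(\overrightarrow{a}))$ sums (via $\bigoplus$) to $\top_L$, the ``$c \otimes (\cdots)$'' factor that appears in the distributivity clause reduces to $c$ on one side. The only thing worth flagging is the mild abuse of labelling: (T1) was originally stated on $[0,1]$ as $T(x,1)=x$, but the corresponding neutral-element axiom for t-norms on complete lattices (with $\top_L$ replacing $1$) is exactly the condition imposed in the lattice definition of t-norm given earlier, so the appeal is legitimate.
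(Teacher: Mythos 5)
Your proposal is correct and matches the paper's own (inline) justification exactly: the definition of a weight function family gives $\bigoplus_{i=1}^{n} f_i(\overrightarrow{a}) = \top_L$, and the neutral-element axiom for t-norms on $L$ (the lattice analogue of (T1)) then yields $c \otimes \top_L = c$. Your remark about the mild relabelling of (T1) from the $[0,1]$ setting to the lattice setting is a fair and accurate observation, but there is no substantive difference from the paper's argument.
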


The following are some examples of distributive families:

\begin{exem}
	Let $L$ be a complete lattice, $\oplus =\wedge$ and $\otimes=\vee$. Then a weight family of functions $\Gamma =\{f_i:L^n\rightarrow L:\ i=1,2,\cdots,n\}$ must satisfy
	$$\bigvee\limits_{i=1}^nf_i(\overrightarrow{a})=\top_L.$$
	In particular, if for all $\overrightarrow{a}\in L^n$, $f_1(\overrightarrow{a})=\top_L$ and $f_k(\overrightarrow{a})=\bot_L$ for $1<k\le n$, then $\Gamma_1=\{f_1,\cdots,f_n \}$ is a weight function. Furthermore,
	$$c\wedge\left( \bigvee\limits_{i=1}^n f_i(\overrightarrow{a})\right)=c\wedge \top_L=c=c\vee \bot_L=\bigvee\limits_{i=1}^n(c\wedge f_i(\overrightarrow{a})),$$
	that is, $\Gamma$ is a distributive family. Analogously, if $g_n(\overrightarrow{a})=\top_L$ and $g_k(\overrightarrow{a})=\bot_L$ for $\ 1\le k<n$, then $\Gamma_2=\{g_1,\cdots,g_n\}$ is also a distributive family.
\end{exem}

\begin{exem}
	Consider $L=[0,1]$, $\otimes=T_P$ and $\oplus=S_{LK}$. A distributive family 
	$\Gamma =\{f_i:L^n\rightarrow L:\ i=1,2,\cdots,n\}$  must satisfy:
			$$f_1(\overrightarrow{x})+\cdots+f_n(\overrightarrow{x})=1,$$
	and
	$$c= \min(c\cdot	f_1(\overrightarrow{x})+\cdots+ c\cdot f_n(\overrightarrow{x}),1)$$
	for any $c\in [0,1]$ and each $\overrightarrow{x}\in[0,1]^n$. In particular, $\Gamma=\{f_1,\cdots,f_n\}$, with
	$$f_i(x_1,\cdots,x_n)=\left\{\begin{array}{ll}\frac{1}{n}, & \mbox{ if } x_1=\cdots=x_n\\ \frac{x_1}{\sum\limits_{i=1}^n x_i}, & \mbox{ otherwise} \end{array} \right.$$
	satisfies these properties. Therefore, $\Gamma$ is a distributive family.
\end{exem}

\begin{exem}
	Let be $L=\langle\mathbb{I}[0,1],\le_{KM}\rangle$, where $\mathbb{I}[0,1]=\{[a,b]:\ 0\le a\le b\le 1 \}$ and $\le_{KM}$ is the 
	{\it Kulisch-Miranker} partial order \citep{Kulisch}, i.e., $[a,b]\le_{KM}[c,d] \iff a\le c \mbox{ and } b\le d$. Moreover, 
	$[a,b]\oplus [c,d] = [T_P(a,c), T_P(b,d)]$ and $[a,b]\otimes[c,d] = [S_{LK}(a,c), S_{LK}(b,d)]$ 
	are a t-norm and a t-comorm, respectively, on $L$ (See Bedregal and Takahashi \citep{BedregalTakahashi2006}). It is easy to verify that the finite family of functions $\Gamma$ formed by $f_i:\mathbb{I}[0,1]^n\rightarrow\mathbb{I}[0,1]^n$ given by $f_i(I_1\cdots,I_n)=\left[\frac{1}{n},\frac{1}{n}\right]$ (constant functions) provides a distributive family.
\end{exem}

Now we can define  our proposed generalized form of \textsf{OWA}.

\begin{defi}[{\bf DYOWAs}]
	Given a complete lattice $L$, a t-norm $\otimes$, a t-conorm $\oplus$ and a weigtht function family $\Gamma=\{f_i:L^n\rightarrow L:\ i=1,2,\cdots,n\}$, we call of {\bf Dynamic Ordered Weighted Averaging Function  (DYOWA)}, the function:
	$$DYOWA_{\Gamma}(\overrightarrow{a})=\bigoplus\limits_{i=1}^n(f_i(\overrightarrow{a})\otimes b_i),$$
	where $(b_1,\cdots,b_n)=\mathscr{L}\!\mathscr{M}(a_1,\cdots,a_n)$.
\end{defi}

Below we will present some examples of \textbf{DYOWA} functions.

\begin{exem}
	Let $\oplus$, $\otimes$, $\Gamma_1$ and $\Gamma_2$ defined in Example 3, then $DYOWA_{\Gamma_1}$ and $DYOWA_{\Gamma_2}$ are:
	\begin{eqnarray*}
		DYOWA_{\Gamma_1}(\overrightarrow{a}) & = & \bigvee\limits_{i=1}^n(f_i(\overrightarrow{a})\wedge b_i)\\
		& = & (\top_L\wedge b_1)\vee(\bot_L\wedge b_2)\vee\cdots\\
		& & \hspace{3.35cm} \vee(\bot_L\wedge b_n)\\
		& = & b_1\vee \bot_L\vee\cdots\vee \bot_L\\
		& = & b_1\\
		&= & a_1\vee a_2\vee\cdots \vee a_n
	\end{eqnarray*} 
	and
	\begin{eqnarray*}
		DYOWA_{\Gamma_2}(\overrightarrow{a}) &= & \bigvee\limits_{i=1}^n(g_i(\overrightarrow{a})\wedge b_i)\\
		& = & (\bot_L\wedge b_1)\vee\cdots\vee(\bot_L\wedge b_{n-1})\\
		& & \hspace{2.8cm} \vee(\top_L\wedge b_n)\\
		& = & \bot_L\vee \cdots \vee \bot_L \vee b_n\\
		& = & b_n\\
		& = & a_1\wedge a_2\wedge\cdots \wedge a_n
	\end{eqnarray*}
	
	In addition, $DYOWA_{\Gamma_1}$ and $DYOWA_{\Gamma_2}$ are isotonic functions which satisfy {\bf (IP)}, {\bf (SP)}, {\bf (NP)}, {\bf (AP)} and {\bf (ASP)}, but do not satisfy {\bf (ZD)} and {\bf (OD)}.
\end{exem}

\begin{exem}
	If $\oplus$, $\otimes$ and $\Gamma$ are defined as in Example 4, then
	$$DYOWA_{\Gamma}(x_1,\cdots,x_n)=\sum\limits_{i=1}^n\frac{x_i^2}{\sum\limits_{j=1}^n x_j}=
	\frac{\sum\limits_{i=1}^n x_i^2}{\sum\limits_{j=1}^n x_j}.$$
	Futhermore, $DYOWA_{\Gamma}$ satisfies {\bf (IP)} and {\bf (SP)}, but do not  {\bf (NP)}, {\bf (HP)}, {\bf (ZD)}, {\bf (OD)} and {\bf (AP)}. It is important to note that, when $n=3$, this \textsf{DYOWA} function is not monotonic, since $DYOWA(0.5,0.2,0.1)=0.375$ and $DYOWA(0.5,$ \linebreak $0.22,0.2)=0.368$.
\end{exem}

\begin{exem}
	For $L=\langle \mathbb{I}[0,1],\le_{KM}\rangle$, $\oplus$, $\otimes$ and $\Gamma$ provided in Example 5 a formulae to calculate $DYOWA_\Gamma$ for any dimension $n$ is difficult to find. However, for $n=2$ this can be done by observing that for  $[\underline{x},\overline{x}],[\underline{y},\overline{y}]\in \mathbb{I}[0,1]$:
	$$[\underline{x},\overline{x}]\wedge_{KM}[\underline{y},\overline{y}] =[\min(\underline{x},\underline{y}),\min(\overline{x},\overline{y})]$$
	and
	$$[\underline{x},\overline{x}]\vee_{KM}[\underline{y},\overline{y}] =[\max(\underline{x},\underline{y}),\max(\overline{x},\overline{y})]$$
	Thus, for any $([\underline{x},\overline{x}],[\underline{y},\overline{y}])\in\mathbb{I}[0,1]^2$, we obtain the totally ordered vector $([\underline{b_1},\overline{b_1}],[\underline{b_2},\overline{b_2}])$ as follows:
	$$[\underline{b_1},\overline{b_1}]=[\max(\underline{x},\underline{y}),\max(\overline{x},\overline{y})]$$
	and
	$$[\underline{b_2},\overline{b_2}]=[\min(\underline{x},\underline{y}),\min(\overline{x},\overline{y})].$$
	Therefore, $	DYOWA_{\Gamma}([\underline{x},\overline{x}],[\underline{y},\overline{y}])$ is
	{\scriptsize\begin{eqnarray*}
			\left[\min\left(\frac{\max(\underline{x},\underline{y})+\min(\underline{x},\underline{y})}{2},1\right), \min\left(\frac{\max(\overline{x},\overline{y})+\min(\overline{x},\overline{y})}{2},1\right)\right]\\
			= \left[\min\left(\frac{\underline{x}+\underline{y}}{2},1\right), \min\left(\frac{\overline{x}+\overline{y}}{2},1\right) \right]\\
			= \left[\frac{\underline{x}+\underline{y}}{2},\frac{\overline{x}+\overline{y}}{2}\right]
		\end{eqnarray*}
	}
	
	We can easily verify that $DYOWA_\Gamma$ is an isotonic function which satisfies {\bf (IP)}, {\bf (SP)}, {\bf (HP)}, but does not  {\bf (NP)}, {\bf (ZD)}, {\bf (OD)}, {\bf (AP)} and {\bf (ASP)}.
\end{exem}

In what follows, we prove some general properties of \textsf{DYOWA} functions.

\vspace{-0.5cm}
\section{Properties of DYOWA Functions}

In the Example 7 we provided a \textsf{DYOWA} function which is not  isotonic and hence is not an aggregation. In this section we show some other underlying properties of such functions.

To ensure the monotonicity of a \textsf{DYOWA} function from the weight functions is not an easy task, however the  boundary conditions are characterized by the next theorem.

\begin{teo}
	Given  $\langle L, \bigoplus, \otimes \rangle$ and a weighted function family $\Gamma=\{f_i:L^n\rightarrow L:\ i=1,2,\cdots,n\}$, then $DYOWA_{\Gamma}$ is such that $DYOWA_{\Gamma}(\bot_L,\cdots,\bot_L)=\bot_L$ and\linebreak $DYOWA_{\Gamma}(\top_L,$ $\cdots,\top_L)=\top_L$.
\end{teo}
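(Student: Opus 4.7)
The plan is to evaluate $DYOWA_{\Gamma}$ at the two constant vectors by first computing the auxiliary vector $(b_1,\ldots,b_n)=\mathscr{L}\!\mathscr{M}(\cdot)$, then simplifying the $\otimes$-terms using the boundary behaviour of the t-norm, and finally using the very definition of ``vector of weights'' to collapse the $\bigoplus$.

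First I would handle the bottom case. Observe that for $\overrightarrow{a}=(\bot_L,\ldots,\bot_L)$, every meet $a_{j_1}\wedge\cdots\wedge a_{j_k}$ in the Lizasoain--Moreno construction equals $\bot_L$, hence $b_k=\bot_L$ for every $k$. Now apply Proposition \ref{prop:TnormProp}(iv): $f_i(\overrightarrow{a})\otimes\bot_L=\bot_L$ for each $i$. Since $\bot_L$ is the neutral element of $\oplus$, iterating gives
\[
DYOWA_{\Gamma}(\bot_L,\ldots,\bot_L)=\bigoplus_{i=1}^n\bot_L=\bot_L.
\]

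Next I would treat the top case symmetrically. For $\overrightarrow{a}=(\top_L,\ldots,\top_L)$ every meet appearing in the definition of $b_k$ is $\top_L$, so $b_k=\top_L$ for every $k$. Then by (T1) (i.e., $x\otimes\top_L=x$) each summand reduces to $f_i(\top_L,\ldots,\top_L)\otimes\top_L=f_i(\top_L,\ldots,\top_L)$. Since $\Gamma$ is a weight function family, $(f_1(\top_L,\ldots,\top_L),\ldots,f_n(\top_L,\ldots,\top_L))$ is a vector of weights on $\langle L,\bigoplus,\otimes\rangle$, which by definition means $\bigoplus_{i=1}^n f_i(\top_L,\ldots,\top_L)=\top_L$. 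Hence $DYOWA_{\Gamma}(\top_L,\ldots,\top_L)=\top_L$.

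There is no real obstacle here: the statement is essentially a bookkeeping exercise combining Lemma 1 at constant inputs, property (iv) of Proposition \ref{prop:TnormProp}, the neutral-element law (T1), and the summation-to-$\top_L$ condition built into the definition of weight function family. The only mildly delicate point worth writing out explicitly is the observation that the $b_k$'s at a constant vector are again that same constant, which is immediate from the explicit formula $b_k=\bigvee_{(j_1,\ldots,j_k)\subseteq\{1,\ldots,n\}}a_{j_1}\wedge\cdots\wedge a_{j_k}$ given in Lemma 1. Note that distributivity of $\Gamma$ is not required; the plain weight-family hypothesis suffices for both boundary equalities.
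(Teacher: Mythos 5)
Your proposal is correct and follows essentially the same route as the paper's own proof: both cases reduce the $b_i$'s to the constant value, apply Proposition~\ref{prop:TnormProp}(iv) (resp.\ the neutral-element law for $\otimes$), and invoke the weight-family condition $\bigoplus_{i=1}^n f_i(\overrightarrow{a})=\top_L$ for the top case. Your explicit remarks that $\bot_L$ is neutral for $\oplus$ and that distributivity of $\Gamma$ is not needed are small but accurate refinements of the same argument.
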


\begin{proof}
	By definition,
	\begin{center}
		\small
		$DYOWA_{\Gamma}(\bot_L,\cdots,\bot_L)=\bigoplus\limits_{i=1}^n(f_i(\bot_L,\cdots,\bot_L)\otimes b_i),$
	\end{center}
	but $b_i=\bot_L$ for all $i=1,2,\ldots,n$. So, according to item (iv) of Proposition \ref{prop:TnormProp}, $a\otimes\bot_L=\bot_L$:
	\begin{center}
		\small
		\begin{eqnarray*}
			DYOWA_{\Gamma}(\bot_L,\cdots,\bot_L)&=&\bigoplus\limits_{i=1}^n(f_i(\bot_L,\cdots,\bot_L)\otimes \bot_L) \\
			&=& \bigoplus\limits_{i=1}^n \bot_L =\bot_L
		\end{eqnarray*}
	\end{center}
	%
	On the other hand,
	{\small\begin{eqnarray*}
			DYOWA_{\Gamma}(\top_L,\cdots,\top_L)& =& \bigoplus\limits_{i=1}^n(f_i(\top_L,\cdots,\top_L)\otimes b_i)\\
			& = &\bigoplus\limits_{i=1}^n(f_i(\top_L,\cdots,\top_L)\otimes \top_L)\\
			& = & \bigoplus\limits_{i=1}^n f_i(\top_L,\cdots,\top_L)=\top_L
		\end{eqnarray*}
	}
\end{proof}	

\begin{prop}
	Given  $\langle L, \bigoplus, \otimes \rangle$ and a distributive family
	$\Gamma=\{f_i:L^n\rightarrow L:\ i=1,2,\cdots,n\}$, then $DYOWA_{\Gamma}$  satisfies {\bf (IP)}. Futhermore,
	\begin{enumerate}
		\item[(i)] If $f$ satisfies {\bf (SP)}, then $DYOWA_{\Gamma}$ also satisfies {\bf (SP)};
		\item[(ii)] $DYOWA_{\Gamma}$ does not satisfy {\bf (ZD)} and {\bf (OD)}. 
	\end{enumerate}
	
\end{prop}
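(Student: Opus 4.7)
The plan is to handle the three claims in turn, each one leveraging the distributivity of $\Gamma$ and the properties of $\mathscr{L}\!\mathscr{M}$ established in Proposition 2.

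For idempotency I will feed the constant input $(x,\ldots,x)$ into $DYOWA_\Gamma$. Proposition 2(iii) gives $\mathscr{L}\!\mathscr{M}(x,\ldots,x)=(x,\ldots,x)$, so every $b_i$ equals $x$. The defining sum then reads $\bigoplus_{i=1}^{n}(f_i(x,\ldots,x)\otimes x)$, which distributivity (applied with $c=x$) rewrites as $x\otimes\bigoplus_{i=1}^{n} f_i(x,\ldots,x)=x\otimes\top_L=x$, the last equality being (T1).

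For (i) I interpret the hypothesis on $f$ as symmetry of each coordinate function $f_i$, and combine it with Proposition 2(v), which states that $\mathscr{L}\!\mathscr{M}$ is itself insensitive to permutations of its arguments. Thus, for any permutation $\sigma$ of $\{1,\ldots,n\}$, both the weights $f_i(\overrightarrow{a})$ and the reordered coordinates $b_i$ are unchanged when $\overrightarrow{a}$ is replaced by $(a_{\sigma(1)},\ldots,a_{\sigma(n)})$; hence every summand $f_i(\overrightarrow{a})\otimes b_i$ is unchanged and the final $\bigoplus$ coincides, giving (SP) for $DYOWA_\Gamma$.

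For (ii) my first and crucial step is to derive the averaging envelope $a_1\wedge\cdots\wedge a_n \le DYOWA_\Gamma(\overrightarrow{a}) \le a_1\vee\cdots\vee a_n$. Lemma 1 gives $b_n\le b_i\le b_1$ for each $i$; isotonicity of $\otimes$ promotes this to $f_i(\overrightarrow{a})\otimes b_n \le f_i(\overrightarrow{a})\otimes b_i \le f_i(\overrightarrow{a})\otimes b_1$, and aggregating via $\bigoplus$ together with distributivity collapses the outer expressions to $b_n\otimes\top_L=b_n$ and $b_1\otimes\top_L=b_1$. With the envelope in hand, the absence of (ZD) follows because any candidate zero divisor $\overrightarrow{x}$ whose coordinates all lie strictly above $\bot_L$ would force $a_1\wedge\cdots\wedge a_n=\bot_L$, and dually the absence of (OD) follows from the upper envelope. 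I expect the derivation of the averaging envelope to be the main technical step, since it is the point where distributivity of $\Gamma$ really enters; both non-divisor consequences are then immediate order-theoretic corollaries.
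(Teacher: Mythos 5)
Your treatment of \textbf{(IP)} and of item (i) coincides with the paper's: idempotency follows from $\mathscr{L}\!\mathscr{M}(x,\ldots,x)=(x,\ldots,x)$, the distributivity of $\Gamma$ applied with $c=x$, and the fact that $\bigoplus_{i=1}^n f_i(\overrightarrow{a})=\top_L$ together with \textbf{(T1)}; symmetry follows from Proposition 2(v) plus the symmetry of each $f_i$. The averaging envelope you derive is also correct, and it is in fact the content of the paper's next proposition (Proposition 4), proved by exactly the $b_n\le b_i\le b_1$ argument you give.

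The gap is in how you deduce item (ii) from that envelope. From $DYOWA_\Gamma(\overrightarrow{x})=\bot_L$ and the lower bound you only obtain $x_1\wedge\cdots\wedge x_n=\bot_L$, and in a general complete lattice this does \emph{not} contradict every $x_i$ being different from $\bot_L$: incomparable non-bottom elements can meet at $\bot_L$. Concretely, with $\oplus=\vee$, $\otimes=\wedge$ and the distributive family $\Gamma_2$ of Example 3 one has $DYOWA_{\Gamma_2}(a_1,\ldots,a_n)=a_1\wedge\cdots\wedge a_n$, which on the lattice $\{\bot_L,a,b,\top_L\}$ with $a,b$ incomparable sends the non-bottom pair $(a,b)$ to $\bot_L$; dually $DYOWA_{\Gamma_1}=\vee$ sends $(a,b)$ to $\top_L$. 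So the envelope cannot rule out zero or one divisors in the vector sense, and the claim only holds under the reading the paper actually proves: a zero (resp.\ one) divisor is a single element $a\ne\bot_L$ (resp.\ $a\ne\top_L$) with $DYOWA_\Gamma(a,\ldots,a)=\bot_L$ (resp.\ $=\top_L$), which is refuted immediately by \textbf{(IP)}. You should replace your order-theoretic corollary by this idempotency argument, or else restrict your version of the inference to linearly ordered $L$ (such as $[0,1]$), where $\bigwedge x_i=\bot_L$ does force some $x_i=\bot_L$.
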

\begin{proof}
	\hfill\
	\begin{enumerate}
		\item[{\bf (IP)}] Let $\Gamma$ be distributive family, then for all  $a\in L$
		{\small\begin{eqnarray*}
				DYOWA_{\Gamma}(a,a,\cdots,a) & = &\bigoplus\limits_{i=1}^n(f_i(a,a,\cdots,a)\otimes a)\\
				& = & \left(\bigoplus\limits_{i=1}^n f_i(a,a,\cdots,a)\right)\otimes a\\
				& = & a
			\end{eqnarray*}
		}
		\item[{\bf (SP)}] According to  Proposition 2.$v$, for any permutation $\sigma$ on the set $\{1,2,\cdots,n\}$, we have $\mathscr{L}\!\mathscr{M}(a_1,\cdots,a_n)=(b_1,\cdots,b_n)=\mathscr{L}\!\mathscr{M}(a_{\sigma(1)},$ $\cdots,a_{\sigma(n)})$. Therefore,
		{\scriptsize\begin{eqnarray*}
				DYOWA_{\Gamma}(a_{\sigma(1)},\cdots,a_{\sigma(n)}) & = & \bigoplus\limits_{i=1}^n (f_i(a_{\sigma(1)},\cdots,a_{\sigma(n)})\otimes b_i)\\
				& = & \bigoplus\limits_{i=1}^n (f_i(a_1,\cdots,a_n)\otimes b_i)\\
				& = & DYOWA_{\Gamma} (a_1,\cdots,a_n)
			\end{eqnarray*}
		}
		\item[{\bf (ZD)}] If $DYOWA_{\Gamma}$ have a zero divisor $a\ne\bot_L$, then\linebreak $DYOWA_{\Gamma}(a,a\cdots,a)=\bot_L$, but since $DYOWA_{\Gamma}$ always satisfies {\bf (IP)} we also have that \linebreak$DYOWA_{\Gamma}(a,\cdots,a)=a\ne \bot_L$. So, $DYOWA_{\Gamma}$ does not satisfy {\bf (ZD)}.
		
		\item[{\bf (OD)}] Analogously, if $DYOWA_{\Gamma}$ has a one divisor, $a\ne \top_L$, then $DYOWA_{\Gamma}(a,a,\cdots,a)=\top_L$, but \linebreak$DYOWA_{\Gamma}(a,\cdots,a)=a\ne \top_L$. Thus, $DYOWA_{\Gamma}$ does not satisfy {\bf (OD)}.
	\end{enumerate}
\end{proof}

\begin{prop}
	Given  $\langle L, \bigoplus, \otimes \rangle$ and a distributive family $\Gamma=\{f_i:L^n\rightarrow L:\ i=1,2,\cdots,n\}$, then
	$$\bigwedge_{i=1}^n a_i\le DYOWA_{\Gamma}(a_1,a_2,\cdots,a_n)\le \bigvee_{i=1}^n a_1$$
	for any $(a_1,a_2,\cdots,a_n)\in L^n$.
\end{prop}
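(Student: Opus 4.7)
The plan is to sandwich $DYOWA_{\Gamma}(\vec a)$ between the meet and the join of the coordinates by exploiting two facts in tandem: the ordering $b_1 \ge b_2 \ge \cdots \ge b_n$ of the Lizasoain-Moreno vector together with the identification $b_1 = \bigvee_{i} a_i$, $b_n = \bigwedge_{i} a_i$ guaranteed by Lemma 1, and the distributivity hypothesis on $\Gamma$ which lets me collapse the weighted $\bigoplus$ of a constant into that constant.

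For the upper bound, I would first note that $b_i \le_L b_1 = \bigvee_{i=1}^n a_i$ for every $i$. Using the isotonicity of $\otimes$ (in its second argument, obtained via symmetry) I get $f_i(\vec a)\otimes b_i \le_L f_i(\vec a)\otimes b_1$ for each $i$. Then the isotonicity of $\oplus$ yields
\begin{equation*}
DYOWA_{\Gamma}(\vec a)=\bigoplus_{i=1}^n\bigl(f_i(\vec a)\otimes b_i\bigr)\le_L \bigoplus_{i=1}^n\bigl(f_i(\vec a)\otimes b_1\bigr).
\end{equation*}
Because $b_1$ is independent of $i$, the distributivity assumption on $\Gamma$ and the weight condition $\bigoplus_i f_i(\vec a)=\top_L$ give
\begin{equation*}
\bigoplus_{i=1}^n\bigl(b_1\otimes f_i(\vec a)\bigr)=b_1\otimes \Bigl(\bigoplus_{i=1}^n f_i(\vec a)\Bigr)=b_1\otimes \top_L=b_1=\bigvee_{i=1}^n a_i,
\end{equation*}
where the last equality uses that $\top_L$ is the neutral element of $\otimes$ (Proposition \ref{prop:TnormProp}(i)). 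Chaining these inequalities delivers the upper bound.

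The lower bound proceeds symmetrically, replacing $b_1$ by $b_n=\bigwedge_i a_i$. Since $b_i \ge_L b_n$ for all $i$, I obtain $f_i(\vec a)\otimes b_i\ge_L f_i(\vec a)\otimes b_n$, hence $DYOWA_{\Gamma}(\vec a)\ge_L \bigoplus_{i=1}^n(f_i(\vec a)\otimes b_n)$, and a second application of distributivity together with the weight condition reduces the right-hand side to $b_n\otimes \top_L=b_n=\bigwedge_i a_i$.

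There is no real obstacle here: the entire argument reduces to pushing a constant through the $\bigoplus$, which is exactly what the distributivity hypothesis provides. The only point deserving care is to invoke isotonicity of $\otimes$ in the correct argument (via symmetry) and of $\oplus$ when passing from the pointwise inequality $f_i(\vec a)\otimes b_i\le_L f_i(\vec a)\otimes b_1$ to the inequality of the combined $\bigoplus$; without the distributive hypothesis, the collapsing step would fail and only the weaker estimate $DYOWA_\Gamma(\vec a)\le_L b_1\oplus\cdots\oplus b_1$ would be available, which need not equal $b_1$.
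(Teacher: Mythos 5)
Your proof is correct and follows essentially the same route as the paper's: both arguments use the chain $b_n\le_L b_i\le_L b_1$ from Lemma~1 together with the isotonicity of $\otimes$ and $\oplus$, and then collapse $\bigoplus_{i=1}^n(f_i(\overrightarrow{a})\otimes b_1)$ and $\bigoplus_{i=1}^n(f_i(\overrightarrow{a})\otimes b_n)$ to $b_1$ and $b_n$ via the distributivity of $\Gamma$ and the weight condition $\bigoplus_{i=1}^n f_i(\overrightarrow{a})=\top_L$. Your write-up is in fact slightly more explicit than the paper's, which states these collapsing equalities without spelling out the appeal to distributivity, symmetry of $\otimes$, and the neutral element $\top_L$.
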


\begin{proof}
	As $b_n=a_1\wedge a_2\wedge\cdots \wedge a_n\le a_i\le a_1\vee a_2\vee \cdots \vee a_n=b_1$, for all $i\in\{1,\cdots,n\}$, and t-norms and t-conorms are isotonic functions, we have
	\begin{eqnarray*}
		b_n& =& \bigoplus\limits_{i=1}^n(f_i(a_1,\cdots,a_n)\otimes b_n)\\
		& \le& \bigoplus\limits_{i=1}^n(f_i(a_1,\cdots,a_n)\otimes b_i)\\
		& \le & \bigoplus\limits_{i=1}^n(f_i(a_1,\cdots,a_n)\otimes b_1)=b_1
	\end{eqnarray*}
	Therefore,
	$$\bigwedge_{i=1}^n a_i\le DYOWA_{\Gamma}(a_1,a_1,\cdots,a_n)\le \bigvee_{i=1}^n a_1$$
\end{proof}

Theorem \ref{th:YagerIsLM} states  that OWAs are instances  of LMOWAs. the next Theorem shows that DYOWAs generalize LMOWAs.

\begin{teo}
	Let $\overrightarrow{w}=(w_1,\cdots,w_n)$ a distributive vector of weights in a complete latice $\langle L,\bigoplus,\otimes \rangle$ and $LMOWA_{\overrightarrow{w}}$ the Lizasoain-Moreno OWA, then there is a distributive family $\Gamma=\{f_i:L^n\rightarrow L:\ i=1,2,\cdots,n\}$ such that for any $\overrightarrow{a}=(a_1,\cdots,a_n)\in L^n$ we have $DYOWA_\Gamma(\overrightarrow{a})=LMOWA_{\overrightarrow{w}}(\overrightarrow{a})$, that is, DYOWA functions are provides a generalization of Lizasoain-Moreno OWA.
\end{teo}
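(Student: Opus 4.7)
The plan is to exhibit a distributive weight function family $\Gamma$ built directly from the fixed vector $\overrightarrow{w}$, namely the family of \emph{constant} functions $f_i : L^n \to L$ defined by $f_i(\overrightarrow{a}) = w_i$ for every $\overrightarrow{a} \in L^n$ and every $i \in \{1, \ldots, n\}$. This is the most natural candidate, since the only difference between an \textsf{LMOWA} and a \textsf{DYOWA} is that the weights in the latter are allowed to depend on the input. Forcing the would-be weight functions to ignore the input recovers the static case.

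First, I would verify that $\Gamma$ is a weight function family in the sense of the definition. This amounts to showing that $(f_1(\overrightarrow{a}), \ldots, f_n(\overrightarrow{a})) = (w_1, \ldots, w_n)$ is a vector of weights on $\langle L, \bigoplus, \otimes \rangle$ for every $\overrightarrow{a} \in L^n$, which is immediate from the hypothesis that $\overrightarrow{w}$ itself is such a vector, giving $\bigoplus_{i=1}^n f_i(\overrightarrow{a}) = \bigoplus_{i=1}^n w_i = \top_L$. Second, I would check distributivity of $\Gamma$: for every $c \in L$,
\begin{equation*}
c \otimes \left( \bigoplus_{i=1}^n f_i(\overrightarrow{a}) \right) = c \otimes \left( \bigoplus_{i=1}^n w_i \right) = \bigoplus_{i=1}^n (c \otimes w_i) = \bigoplus_{i=1}^n (c \otimes f_i(\overrightarrow{a})),
\end{equation*}
where the middle equality uses the hypothesis that $\overrightarrow{w}$ is a distributive vector of weights.

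Once $\Gamma$ is known to be a distributive family, the comparison of the two operators is immediate: for any $\overrightarrow{a} = (a_1, \ldots, a_n) \in L^n$, letting $(b_1, \ldots, b_n) = \mathscr{L}\!\mathscr{M}(a_1, \ldots, a_n)$, one has
\begin{equation*}
DYOWA_{\Gamma}(\overrightarrow{a}) = \bigoplus_{i=1}^n (f_i(\overrightarrow{a}) \otimes b_i) = \bigoplus_{i=1}^n (w_i \otimes b_i) = LMOWA_{\overrightarrow{w}}(\overrightarrow{a}),
\end{equation*}
which finishes the argument. There is no serious obstacle in this proof; the only thing to watch is that the distributivity of $\Gamma$ as defined for \textsf{DYOWA}s coincides, on constant families, with the distributivity condition imposed on $\overrightarrow{w}$ in the definition of \textsf{LMOWA}s, which is essentially by construction. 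The upshot is that Lizasoain--Moreno's operators are exactly the \textsf{DYOWA}s whose weight functions are constant, so \textsf{DYOWA}s are a strict generalization (strictness being witnessed by the input-dependent weight families already exhibited in Examples~4 and~5).
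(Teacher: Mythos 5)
Your proposal is correct and follows exactly the paper's approach: define the constant family $f_i(\overrightarrow{a})=w_i$ and observe that the two formulas coincide term by term. You are in fact slightly more thorough than the paper, which omits the (easy) verification that the constant family inherits the weight-vector and distributivity conditions from $\overrightarrow{w}$.
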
 

\begin{proof}
	It is enough to define the constant functions $f_i(\overrightarrow{a})=w_i$, since:
	\begin{eqnarray*}
			DYOWA_\Gamma(\overrightarrow{a}) & = & \bigoplus\limits_{i=1}^n \left(f_i(\overrightarrow{a})\otimes b_i\right)\\
			& = & \bigoplus\limits_{i=1}^n \left(w_i\otimes b_i\right)\\
			& = & LMOWA_{\overrightarrow{w}}(\overrightarrow{a})
	\end{eqnarray*} 
\end{proof}

\begin{cor}
	Any Yager's OWA is also a DYOWA function.
\end{cor}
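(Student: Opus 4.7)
The plan is to chain the two immediately preceding results: Theorem \ref{th:YagerIsLM}, which identifies every Yager \textsf{OWA} with a specific Lizasoain-Moreno \textsf{OWA}, and the theorem just proved, which exhibits every \textsf{LMOWA} as a \textsf{DYOWA}. Since the corollary is a purely compositional consequence, I do not expect any genuine obstacle; the task is to assemble the two identities on a common triple $\langle L,\bigoplus,\otimes\rangle$.

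First I would fix an arbitrary Yager \textsf{OWA} with weights $\overrightarrow{w}=(w_1,\dots,w_n)$. By Theorem \ref{th:YagerIsLM}, setting $L=[0,1]$, $\otimes=T_P$ and $\oplus=S_{LK}$ makes $\overrightarrow{w}$ a distributive vector of weights on $\langle [0,1],S_{LK},T_P\rangle$, and the equality $OWA_{\overrightarrow{w}}(\overrightarrow{a})=LMOWA_{\overrightarrow{w}}(\overrightarrow{a})$ holds for every $\overrightarrow{a}\in [0,1]^n$.

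Next I would apply the previous theorem to this same distributive $\overrightarrow{w}$: take the constant family $\Gamma=\{f_i:[0,1]^n\to [0,1] : f_i(\overrightarrow{a})=w_i,\ i=1,\dots,n\}$. The family is distributive because the identity $c\otimes\left(\bigoplus_{i=1}^n f_i(\overrightarrow{a})\right)=\bigoplus_{i=1}^n(c\otimes f_i(\overrightarrow{a}))$ reduces, for constant $f_i=w_i$, to the distributivity already verified for $\overrightarrow{w}$. The conclusion of that theorem then yields $DYOWA_\Gamma(\overrightarrow{a})=LMOWA_{\overrightarrow{w}}(\overrightarrow{a})$ for every $\overrightarrow{a}\in[0,1]^n$.

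Finally, concatenating the two equalities gives $OWA_{\overrightarrow{w}}(\overrightarrow{a})=DYOWA_\Gamma(\overrightarrow{a})$ for all $\overrightarrow{a}\in[0,1]^n$, which is exactly the claim. The only verification worth stressing is that the triple $\langle[0,1],S_{LK},T_P\rangle$ used in the Yager case is admissible for the \textsf{DYOWA} construction; but this admissibility is precisely what was established during the proof of Theorem \ref{th:YagerIsLM}, so nothing new is required.
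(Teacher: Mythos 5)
Your proposal is correct and follows exactly the paper's own argument: compose Theorem \ref{th:YagerIsLM} (every Yager \textsf{OWA} equals $LMOWA_{\overrightarrow{w}}$ over $\langle[0,1],S_{LK},T_P\rangle$) with the subsequent theorem (every \textsf{LMOWA} is a \textsf{DYOWA} via the constant weight family $f_i(\overrightarrow{a})=w_i$). You simply spell out the intermediate verifications in more detail than the paper's two-line proof does.
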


\begin{proof}
	By Theorem \ref{th:YagerIsLM}, Yager's \textsf{OWA} can be written by using the T-norm $T_P$ and the T-conorm $S_{LK}$, and by Theorem 3, DYOWA's generalizes Lizasoain-Moreno functions.
\end{proof}

\vspace{-1.0cm}
\section{Conclusions and Future Works}

The \textsf{OWA} functions, of Yager, has several applications in the image processing and decision-making fields, however these operators are limited to the preset of a vector of weights from which all values of the function are calculated. In this work we define a new generalized notion of \textsf{OWA} for complete lattices environment, which goes beyond the generalization proposed by Lizasoain and Moreno, in which the weight vector is not fixed but is obtained from the input vector. We believe that this adaptive condition of the weights will fit to applications in which OWAs cannot be applied due to its limitation of fixed weight. Some tests can be found in  \citep{FariasDYOWA}.

In a future work, we intend to study the applications of the \textbf{DYOWA} functions, for example, in decision making problems, in image processing, and in other possible applications.

\section*{Compliance with ethical standards}

{\bf Conflict of interest} The authors declare that there is no conflict of interests regarding the publication of this paper.

\bibliography{refs}

\begin{thebibliography}{42}
\providecommand{\natexlab}[1]{#1}
\providecommand{\url}[1]{{#1}}
\providecommand{\urlprefix}{URL }
\expandafter\ifx\csname urlstyle\endcsname\relax
  \providecommand{\doi}[1]{DOI~\discretionary{}{}{}#1}\else
  \providecommand{\doi}{DOI~\discretionary{}{}{}\begingroup
  \urlstyle{rm}\Url}\fi
\providecommand{\eprint}[2][]{\url{#2}}

\bibitem[{Ahn(2008)}]{Ahn1}
Ahn BS (2008) Preference relation approach for obtaining {OWA} operators
  weights. International Journal of Approximate Reasoning 47(2):166 -- 178,
  \doi{http://dx.doi.org/10.1016/j.ijar.2007.04.001}

\bibitem[{Amin and Emrouznejad(2011{\natexlab{a}})}]{Amin1}
Amin GR, Emrouznejad A (2011{\natexlab{a}}) Optimizing search engines results
  using linear programming. Expert Systems with Applications 38(9):11,534 --
  11,537, \doi{http://dx.doi.org/10.1016/j.eswa.2011.03.030}

\bibitem[{Amin and Emrouznejad(2011{\natexlab{b}})}]{Amin2}
Amin GR, Emrouznejad A (2011{\natexlab{b}}) Parametric aggregation in ordered
  weighted averaging. International Journal of Approximate Reasoning 52(6):819
  -- 827, \doi{http://dx.doi.org/10.1016/j.ijar.2011.02.004}

\bibitem[{Baets and Mesiar(1999)}]{Mesiar}
Baets BD, Mesiar R (1999) Triangular norms on product lattices. Fuzzy Sets and
  Systems 104(1):61 -- 75,
  \doi{http://dx.doi.org/10.1016/S0165-0114(98)00259-0}

\bibitem[{Bedregal and Takahashi(2006)}]{BedregalTakahashi2006}
Bedregal BC, Takahashi A (2006) Interval valued versions of t-conorms, fuzzy
  negations and fuzzy implications. In: 2006 IEEE International Conference on
  Fuzzy Systems, pp 1981--1987, \doi{10.1109/FUZZY.2006.1681975}

\bibitem[{Bedregal et~al(2006)Bedregal, Santos, and
  Callejas-Bedregal}]{Bedregal2006}
Bedregal BC, Santos HS, Callejas-Bedregal R (2006) T-norms on bounded lattices:
  t-norm morphisms and operators. In: 2006 IEEE International Conference on
  Fuzzy Systems, pp 22--28, \doi{10.1109/FUZZY.2006.1681689}

\bibitem[{Beliakov et~al(2016)Beliakov, Bustince, and Calvo}]{Gleb2016}
Beliakov G, Bustince H, Calvo T (2016) A Practical Guide to Averaging
  Functions, Studies in Fuzziness and Soft Computing, vol 329. Springer,
  \doi{10.1007/978-3-319-24753-3}

\bibitem[{Birkhoff(1961)}]{Birkhoff}
Birkhoff G (1961) Lattice Theory, AMS Colloquium Publications, vol~25. American
  Mathematical Society

\bibitem[{Bustince et~al(2010)Bustince, Fernandez, Mesiar, Montero, and
  Orduna}]{overlap}
Bustince H, Fernandez J, Mesiar R, Montero J, Orduna R (2010) Overlap
  functions. Nonlinear Analysis: Theory, Methods $\&$ Applications 72(3):1488
  -- 1499, \doi{http://dx.doi.org/10.1016/j.na.2009.08.033}

\bibitem[{Bustince et~al(2011)Bustince, Paternain, Baets, Calvo, Fodor, Mesiar,
  Montero, and Pradera}]{Bustince}
Bustince H, Paternain D, Baets BD, Calvo T, Fodor J, Mesiar R, Montero J,
  Pradera A (2011) Two Methods for Image Compression/Reconstruction Using OWA
  Operators, Springer Berlin Heidelberg, Berlin, Heidelberg, pp 229--253.
  \doi{10.1007/978-3-642-17910-5$\underline{\ }$12}

\bibitem[{Bustince et~al(2012)Bustince, Pagola, Mesiar, Hullermeier, and
  Herrera}]{Bustince2012}
Bustince H, Pagola M, Mesiar R, Hullermeier E, Herrera F (2012) Grouping,
  overlap, and generalized bientropic functions for fuzzy modeling of pairwise
  comparisons. IEEE Transactions on Fuzzy Systems 20(3):405--415,
  \doi{10.1109/TFUZZ.2011.2173581}

\bibitem[{Bustince et~al(2013)Bustince, Galar, Bedregal, Koles\'arov\'a, and
  Mesiar}]{Bustince2013}
Bustince H, Galar M, Bedregal B, Koles\'arov\'a A, Mesiar R (2013) A new
  approach to interval-valued {C}hoquet integrals and the problem of ordering
  in interval-valued fuzzy set applications. IEEE Transactions on Fuzzy Systems
  21(6):1150 -- 1162, \doi{10.1109/TFUZZ.2013.2265090}

\bibitem[{Chen and Zhou(2011)}]{Chen}
Chen H, Zhou L (2011) An approach to group decision making with interval fuzzy
  preference relations based on induced generalized continuous ordered weighted
  averaging operator. Expert Systems with Applications 38(10):13,432 -- 13,440,
  \doi{http://dx.doi.org/10.1016/j.eswa.2011.04.175}

\bibitem[{Chen and Hwang(1992)}]{Chen2}
Chen SJ, Hwang CL (1992) Fuzzy Multiple Attribute Decision Making: Methods and
  Applications, Lecture Notes in Economics and Mathematical Systems, vol 375.
  Springer, Heidelberg, Berlin, \doi{10.1007/978-3-642-46768-4}

\bibitem[{Cheng and Chang(2006)}]{Cheng}
Cheng CH, Chang JR (2006) {MCDM} aggregation model using situational {ME-OWA}
  and {ME-OWGA} operators. International Journal of Uncertainty, Fuzziness and
  Knowledge-Based Systems 14(04):421--443, \doi{10.1142/S0218488506004102}

\bibitem[{Cooman and Kerre(1994)}]{DeCooman1994}
Cooman GD, Kerre E (1994) Order norms on bounded partially ordered sets. Fuzzy
  Mathematics 2:281--310

\bibitem[{Dimuro and Bedregal(2014)}]{DIMURO2014}
Dimuro GP, Bedregal B (2014) Archimedean overlap functions: The ordinal sum and
  the cancellation, idempotency and limiting properties. Fuzzy Sets and Systems
  252:39 -- 54, \doi{http://dx.doi.org/10.1016/j.fss.2014.04.008}

\bibitem[{Dubois and Prade(1985)}]{DUBOIS1985}
Dubois D, Prade H (1985) A review of fuzzy set aggregation connectives.
  Information Sciences 36(1):85 -- 121,
  \doi{http://dx.doi.org/10.1016/0020-0255(85)90027-1}

\bibitem[{Dubois and Prade(2004)}]{DUBOIS2004}
Dubois D, Prade H (2004) On the use of aggregation operations in information
  fusion processes. Fuzzy Sets and Systems 142(1):143 -- 161,
  \doi{http://dx.doi.org/10.1016/j.fss.2003.10.038}

\bibitem[{Emrouznejad(2008)}]{Emrouznejad}
Emrouznejad A (2008) {MP-OWA}: The most preferred {OWA} operator.
  Knowledge-Based Systems 21(8):847 -- 851,
  \doi{http://dx.doi.org/10.1016/j.knosys.2008.03.057}

\bibitem[{Farias et~al(2016{\natexlab{a}})Farias, Costa, Lopes, Bedregal, and
  Santiago}]{FariasDYOWA}
Farias ADS, Costa VS, Lopes LRA, Bedregal BC, Santiago RHN (2016{\natexlab{a}})
  A method of image reduction and noise reduction based on a generalization of
  ordered weighted averaging functions. arXiv:160103785

\bibitem[{Farias et~al(2016{\natexlab{b}})Farias, Lopes, Bedregal, and
  Santiago}]{FariasJIFS2016}
Farias ADS, Lopes LRA, Bedregal BC, Santiago RHN (2016{\natexlab{b}}) Closure
  properties for fuzzy recursively enumerable languages and fuzzy recursive
  languages. Journal of Intelligent $\&$ Fuzzy Systems 31(3):1795--1806,
  \doi{10.3233/JIFS-152489}

\bibitem[{Farias et~al(2016{\natexlab{c}})Farias, Santiago, and
  Bedregal}]{Farias2}
Farias ADS, Santiago RHN, Bedregal B (2016{\natexlab{c}}) Some properties of
  generalized mixture functions. In: 2016 IEEE International Conference on
  Fuzzy Systems (FUZZ-IEEE), pp 288--293, \doi{10.1109/FUZZ-IEEE.2016.7737699}

\bibitem[{Gierz et~al(1980)Gierz, Hofmann, Keimel, Lawson, Mislove, and
  Scott}]{Gierz}
Gierz G, Hofmann KH, Keimel K, Lawson JD, Mislove M, Scott DS (1980) A
  compendium of continuous lattices. Springer, \doi{10.1007/978-3-642-67678-9}

\bibitem[{Klement et~al(2000)Klement, Mesiar, and Pap}]{Klement}
Klement EP, Mesiar R, Pap E (2000) Triangular Norms, vol~8. Springer

\bibitem[{Kulisch and Miranker(1981)}]{Kulisch}
Kulisch UB, Miranker WL (1981) Computer Arithmetic Theory and Pratice. Academic
  Press, San Diego, \doi{https://doi.org/10.1016/B978-0-12-428650-4.50001-4}

\bibitem[{Liang and Xu(2014)}]{Liang2014}
Liang X, Xu W (2014) Aggregation method for motor drive systems. Electric Power
  Systems Research 117:27 -- 35,
  \doi{http://dx.doi.org/10.1016/j.epsr.2014.07.022}

\bibitem[{Lin and Jiang(2014)}]{Lin}
Lin J, Jiang Y (2014) Some hybrid weighted averaging operators and their
  application to decision making. Information Fusion 16:18 -- 28,
  \doi{http://dx.doi.org/10.1016/j.inffus.2011.06.001}

\bibitem[{Lizasoain and Moreno(2013)}]{Lizasoain}
Lizasoain I, Moreno C (2013) {OWA} operators defined on complete lattices.
  Fuzzy Sets and Systems 224:36 -- 52,
  \doi{http://dx.doi.org/10.1016/j.fss.2012.10.012}

\bibitem[{Llamazares(2015)}]{Llamazares}
Llamazares B (2015) Constructing {C}hoquet integral-based operators that
  generalize weighted means and owa operators. Information Fusion 23:131 --
  138, \doi{http://dx.doi.org/10.1016/j.inffus.2014.06.003}

\bibitem[{Merig\'o(2012)}]{Merigo2}
Merig\'o JM (2012) Probabilities in the {OWA} operator. Expert Systems with
  Applications 39(13):11,456 -- 11,467,
  \doi{http://dx.doi.org/10.1016/j.eswa.2012.04.010}

\bibitem[{Merig\'o and Gil-Lafuente(2009)}]{Merigo1}
Merig\'o JM, Gil-Lafuente AM (2009) The induced generalized {OWA} operator.
  Information Sciences 179(6):729 -- 741,
  \doi{http://dx.doi.org/10.1016/j.ins.2008.11.013}

\bibitem[{Miguel et~al(2016)Miguel, Bustince, Pekala, Bentkowska, da~Silva,
  Bedregal, Mesiar, and Ochoa}]{DeMiguel2016}
Miguel LD, Bustince H, Pekala B, Bentkowska U, da~Silva I, Bedregal B, Mesiar
  R, Ochoa G (2016) Interval-valued atanassov intuitionistic {OWA} aggregations
  using admissible linear orders and their application to decision making. IEEE
  Trans Fuzzy Systems 24(6):1586--1597

\bibitem[{Palmeira et~al(2014)Palmeira, Bedregal, Mesiar, and
  Fernandez}]{PALMEIRA2014}
Palmeira E, Bedregal B, Mesiar R, Fernandez J (2014) A new way to extend
  t-norms, t-conorms and negations. Fuzzy Sets and Systems 240:1 -- 21,
  \doi{http://dx.doi.org/10.1016/j.fss.2013.05.008}

\bibitem[{Paternain et~al(2012)Paternain, Jurio, Barrenechea, Bustince,
  Bedregal, and Szmidt}]{Paternain2012}
Paternain D, Jurio A, Barrenechea E, Bustince H, Bedregal B, Szmidt E (2012) An
  alternative to fuzzy methods in decision-making problems. Expert Systems with
  Applications 39(9):7729 -- 7735,
  \doi{http://dx.doi.org/10.1016/j.eswa.2012.01.081}

\bibitem[{Paternain et~al(2015)Paternain, Fernandez, Bustince, Mesiar, and
  Beliakov}]{Paternain2015}
Paternain D, Fernandez J, Bustince H, Mesiar R, Beliakov G (2015) Construction
  of image reduction operators using averaging aggregation functions. Fuzzy
  Sets and Systems 261:87 -- 111,
  \doi{http://dx.doi.org/10.1016/j.fss.2014.03.008}

\bibitem[{Torra and Godo(2002)}]{Torra}
Torra V, Godo L (2002) Continuous WOWA Operators with Application to
  Defuzzification, Physica-Verlag HD, Heidelberg, pp 159--176.
  \doi{10.1007/978-3-7908-1787-4$\underline{\ }$4}

\bibitem[{Yager(1988)}]{Yager1988}
Yager RR (1988) Ordered weighted averaging aggregation operators in
  multicriteria decision making. IEEE Transactions on Systems, Man, and
  Cybernetics 18:183--190

\bibitem[{Yager(2006)}]{Yager2006}
Yager RR (2006) Centered owa operators. Soft Computing 11(7):631--639,
  \doi{10.1007/s00500-006-0125-z}

\bibitem[{Zadeh(1965)}]{ZADEH1965}
Zadeh LA (1965) Fuzzy sets. Information and Control 8(3):338 -- 353,
  \doi{http://dx.doi.org/10.1016/S0019-9958(65)90241-X}

\bibitem[{Zadrozny and Kacprzyk(2006)}]{Zadrozny}
Zadrozny S, Kacprzyk J (2006) On tuning {OWA} operators in a flexible querying
  interface. In: 7th International Conference on Flexible Query Answering
  Systems, in: Lect. Notes Comput. Sci., Berlin, Heidelberg, vol 4027, pp
  97--108, \doi{10.1007/11766254$\underline{\ }$9}

\bibitem[{Zhou et~al(2008)Zhou, Chiclana, John, and Garibaldi}]{Zhou2008}
Zhou SM, Chiclana F, John RI, Garibaldi JM (2008) Type-1 owa operators for
  aggregating uncertain information with uncertain weights induced by type-2
  linguistic quantifiers. Fuzzy Sets and Systems 159(24):3281 -- 3296,
  \doi{http://dx.doi.org/10.1016/j.fss.2008.06.018}

\end{thebibliography}
\bibliographystyle{spbasic}

\end{document}